\begin{document}
\newtheorem{theorem}{Theorem}[section]
\newtheorem{lemma}[theorem]{Lemma}
\newtheorem{claim}[theorem]{Claim}
\newtheorem{definition}[theorem]{Definition}
\newtheorem{conjecture}[theorem]{Conjecture}
\newtheorem{proposition}[theorem]{Proposition}
\newtheorem{algorithm}[theorem]{Algorithm}
\newtheorem{corollary}[theorem]{Corollary}
\newtheorem{observation}[theorem]{Observation}
\newtheorem{problem}[theorem]{Open Problem}
\newcommand{\noin}{\noindent}
\newcommand{\ind}{\indent}
\newcommand{\al}{\alpha}
\newcommand{\om}{\omega}
\newcommand{\pp}{\mathcal P}
\newcommand{\ppp}{\mathfrak P}
\newcommand{\R}{{\mathbb R}}
\newcommand{\N}{{\mathbb N}}
\newcommand\eps{\varepsilon}
\newcommand{\E}{\mathbb E}
\newcommand{\Bin}{\textrm{Bin}}
\newcommand{\Prob}{\mathbb{P}}
\newcommand{\pl}{\textrm{C}}
\newcommand{\dang}{\textrm{dang}}
\renewcommand{\labelenumi}{(\roman{enumi})}
\newcommand{\bc}{\bar c}
\newcommand{\G}{{\mathcal{G}}}
\newcommand{\Po}{{\mathcal{P}}}

\newcommand{\expect}[1]{\E \left [ #1 \right ]}
\newcommand{\floor}[1]{\left \lfloor #1 \right \rfloor}
\newcommand{\ceil}[1]{\left \lceil #1 \right \rceil}
\newcommand{\of}[1]{\left( #1 \right)}
\newcommand{\set}[1]{\left\{ #1 \right\}}
\newcommand{\angs}[1]{\left\langle #1 \right\rangle}
\newcommand{\sqbs}[1]{\left[ #1 \right]}
\newcommand{\sm}{\setminus}
\newcommand{\bfrac}[2]{\of{\frac{#1}{#2}}}
\renewcommand{\k}{\kappa}
\renewcommand{\b}{\beta}
\newcommand{\blue}[1]{{\color{blue} #1}}

\title{The Unit Acquisition Number of Binomial Random Graphs}

\author{Konstantinos Georgiou}
\address{Department of Mathematics, Ryerson University, Toronto, ON, Canada, M5B 2K3}
\thanks{The first and third author are supported in part by NSERC and Ryerson University. The second author is supported in part by OGS}
\email{\texttt{konstantinos@ryerson.ca}}

\author{Somnath Kundu}
\address{Department of Mathematics, Ryerson University, Toronto, ON, Canada, M5B 2K3}
\email{\texttt{somnath.kundu@ryerson.ca}}

\author{Pawe\l{} Pra\l{}at}
\address{Department of Mathematics, Ryerson University, Toronto, ON, Canada, M5B 2K3}
\email{\texttt{pralat@ryerson.ca}}

\begin{abstract}
Let $G$ be a graph in which each vertex initially has weight 1. In each step, the unit weight from a vertex $u$ to a neighbouring vertex $v$ can be moved, provided that the weight on $v$ is at least as large as the weight on $u$. The unit acquisition number of $G$, denoted by $a_u(G)$, is the minimum cardinality of the set of vertices with positive weight at the end of the process (over all acquisition protocols). In this paper, we investigate the Erd\H{o}s-R\'{e}nyi random graph process $(\G(n,m))_{m =0}^{N}$, where $N = {n \choose 2}$. We show that asymptotically almost surely $a_u(\G(n,m)) = 1$ right at the time step the random graph process creates a connected graph. Since trivially $a_u(\G(n,m)) \ge 2$ if the graphs is disconnected, the result holds in the strongest possible sense.
\end{abstract}

\maketitle

\section{Introduction}

Gossiping and broadcasting are two well studied problems involving information dissemination in a group of individuals connected by a communication network~\cite{HHL}. In the gossip problem, each member has a unique piece of information which she would like to pass to everyone else. In the broadcast problem, there is a single piece  of information (starting at one member) which must be passed to every other member of the network. These problems have received attention from mathematicians as well as computer scientists due to their applications in distributed computing~\cite{BGRV}. Gossiping and broadcasting are respectively known as ``all-to-all'' and ``one-to-all'' communication problems. In this paper, we consider the problem of acquisition, which is a type of ``all-to-one'' problem. 

Suppose each vertex of a graph begins with a weight of 1 (this can be thought of as the piece of information starting at that vertex). A \textbf{total acquisition move} is a transfer of all the weight from a vertex $u$ onto a neighbouring vertex $v$, provided that immediately prior to the move, the weight on $v$ is at least the weight on $u$. Suppose a number of total acquisition moves are made until no such moves remain. Such a maximal sequence of moves is referred to as an \textbf{acquisition protocol}  and the vertices which retain positive weight after an acquisition protocol is called a \textbf{residual set}. Note that any residual set is necessarily an independent set. Given a graph $G$, we are interested in the minimum possible size of a residual set and refer to this number as the \textbf{total acquisition number of $G$}, denoted $a_t(G)$.  

\medskip

Other models allow more relaxed consolidation moves and do not require moving all the weight from a vertex $u$ onto a neighbouring vertex $v$ (but it is still required that immediately prior to the move, the weight on $v$ is at least the weight on $u$). A \textbf{unit acquisition move} transfers one unit of weight and a \textbf{fractional acquisition move} allow a fractional (non-integer) amounts of weight to be transferred. Hence, in addition to the total acquisition number $a_t(G)$, the corresponding parameters are the \textbf{unit acquisition number} $a_t(G)$ and the \textbf{fractional acquisition number} $a_f(G)$. Note that unit and then fractional acquisition provide more flexibility in choosing moves than total acquisition does; thus 
\begin{equation}\label{eq:acquisitions}
a_f(G) \le a_u(G) \le a_t(G). 
\end{equation}

The restriction to acquisition moves can be motivated by the so-called ``smaller to larger'' rule in disjoint set data structures. For example, in the UNION-FIND data structure with linked lists, when taking a union, the smaller list should always be appended to the longer list. This heuristic improves the amortized performance over sequences of union operations.  

\medskip

\noindent \textbf{Example:} In order to warm-up with this graph parameter, note that an acquisition protocol for a cycle $C_{4k}$ (for some $k \in \N$) that leaves a residual set of every fourth vertex is the best one can do in any of the three variants of the game; see Figure~\ref{fig:path}. Therefore, $a_f(C_{4k})=a_u(C_{4k})=a_t(C_{4k})=k$.

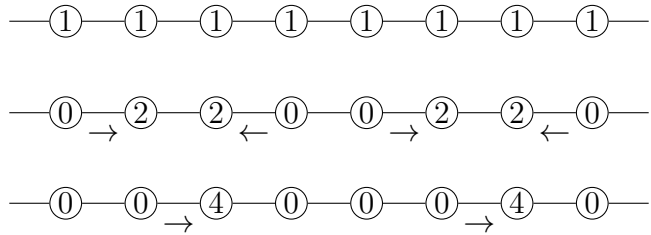
\begin{figure}[ht!]\centering
\begin{tikzpicture}
\draw (-0.75,0) -- (7.75,0);
\foreach \x in {0, 1, 2, 3, 4, 5, 6, 7}{
\filldraw[white] (\x,0) circle (6pt);
\draw (\x,0) circle (6pt);
\draw (\x,0) node {1};
}
\end{tikzpicture}\vskip 0.25 in

\begin{tikzpicture}
\draw (-0.75,0) -- (7.75,0);
\foreach \x in {0, 1, 2, 3, 4, 5, 6, 7}{
\filldraw[white] (\x,0) circle (6pt);
\draw (\x,0) circle (6pt);}
\foreach \x in {1, 2, 5, 6}{
\draw (\x,0) node {2};
}
\foreach \x in {0, 3, 4, 7}{
\draw (\x,0) node {0};
}
\draw (0.5,-0.3) node {$\rightarrow$};
\draw (4.5,-0.3) node {$\rightarrow$};
\draw (2.5,-0.3) node {$\leftarrow$};
\draw (6.5,-0.3) node {$\leftarrow$};
\end{tikzpicture}
\vskip 0.15 in

\begin{tikzpicture}
\draw (-0.75,0) -- (7.75,0);
\foreach \x in {0, 1, 2, 3, 4, 5, 6, 7}{
\filldraw[white] (\x,0) circle (6pt);
\draw (\x,0) circle (6pt);}
\foreach \x in {2,6}{
\draw (\x,0) node {4};
}
\foreach \x in {0, 1,  3, 4, 5, 7}{
\draw (\x,0) node {0};
}
\draw (1.5,-0.3) node {$\rightarrow$};
\draw (5.5,-0.3) node {$\rightarrow$};
\end{tikzpicture}
\caption{The acquisition moves for a fragment of a cycle $C_{4k}$ that leave a residual set of size $k$.} \label{fig:path}
\end{figure}

The parameter $a_t(G)$ was introduced by Lampert and Slater~\cite{LS} and subsequently studied in~\cite{SW, LPWWW, LW, MWW}.  For work on game variations of the parameter and variations where acquisition moves need not transfer the full weight of vertex, see~\cite{Wen, PWW, SW2}. Since in this paper we focus on random structures, we do not comment on these interesting but deterministic results. 

\medskip

Randomness often plays a part in the study of information dissemination problems, usually in the form of a random network or a randomized protocol---see, for example,~\cite{gos, FM, G95}. Before we summarize what is known for the total acquisition number from the perspective of random structures, let us introduce the two models we consider in this paper. The \textbf{binomial random graph} $\G(n,p)$ is a distribution over the class of graphs with vertex set $[n]=\{1, 2, \ldots, n\}$ in which every pair $\{i,j\} \in \binom{[n]}{2}$ appears independently as an edge in $G$ with probability~$p$. Note that $p=p(n)$ may (and usually does) tend to zero as $n$ tends to infinity. We will also consider the \textbf{Erd\H{o}s-R\'{e}nyi random graph process}, which is a stochastic process that starts with $n$ vertices and no edges, and at each step adds one new edge chosen uniformly at random from the set of missing edges. Formally, let $N=\binom{n}{2}$ and let $e_1, e_2,\ldots , e_N$ be a random permutation of the edges of the complete graph $K_n$. The graph process consists of the sequence of random graphs $(\G(n, m))^N_{m=0}$, where $\G(n, m) = ([n], E_m)$ and $E_m = \{e_1, e_2, \ldots, e_m\}$. It is clear that $\G(n, m)$ is a graph taken uniformly at random from the set of all graphs on $n$ vertices and $m$ edges. Finally, we say that an event in a probability space holds \textbf{asymptotically almost surely} (\textbf{a.a.s.}), if its probability tends to one as $n$ goes to infinity. (See, for example,~\cite{JLR} and~\cite{AS} for more details about random graphs.)

\medskip

The total acquisition number of $\G(n,p)$ was studied in~\cite{BBDP}. In particular, LeSaulnier {\em et al.}~\cite{LPWWW} asked for the minimum value of $p=p(n)$ such that a.a.s.\ $a_t(\G(n,p)) = 1$. In~\cite{BBDP} it was proved that $p = \log_2 n/n \approx 1.4427 \ \ln n/n$ is a sharp threshold for this property. Moreover, in the same paper it was also proved that almost all trees $T$ satisfy $a_t(T) = \Theta(n)$, confirming a conjecture of West. 
In~\cite{IMP}, \textbf{random geometric graphs} $\G(n,r)$ were studied in which $n$ vertices are distributed uniformly at random in $[0,\sqrt{n}]^2$ and two vertices being adjacent if and only if their distance is at most $r$. It was proved that asymptotically almost surely $a_t(\G(n,r)) = \Theta( n / (r \log_2 r)^2)$ for the whole range of $r=r(n) \ge 1$ such that $r \log_2 r \le \sqrt{n}$. 
Another way randomness can come into the picture is when initial weights are generated at random. This direction, in particular the case where vertex weights are initially assigned according to independent Poisson distributions of intensity $1$, was considered in~\cite{GKKPZ}.

\medskip

In this paper, we investigate the unit acquisition of $\G(n,p)$. It follows from~(\ref{eq:acquisitions}) and the results from~\cite{BBDP} that a.a.s.\ $a_u(\G(n,p)) = a_t(\G(n,p)) = 1$, provided that $p \ge (1+\eps) \log_2 n / n$ for some $\eps > 0$. However, perhaps surprisingly, it turns out that $p = \ln n / n$ is a sharp threshold for $a_u(\G(n,p)) = 1$---see Corollary~\ref{cor:main}. As a result, this threshold coincides with the threshold for connectivity, which is a trivial lower bound for our property; indeed, if $G$ is disconnected, then $a_u(G) \ge a_f(G) > 1$. In fact, we prove the strongest possible result, that is, we show that $a_u(\G(n,p)) = 1$ holds right at the time step the random graph process creates a connected graph---see Theorem~\ref{thm:main}. On the other hand, it follows from~\cite{BBDP} that at this very moment, a.a.s.\ $a_t(\G(n,p)) > n^{0.3}$ and so there is a drastic difference between the unit acquisition number and the total acquisition counterpart. 

\medskip

Here is our main result. 

\begin{theorem}\label{thm:main}
The following property holds a.a.s. Let $M$ be a random variable defined as follows: 
$$
M = \min \{ m : \G(n, m) \text{ is connected} \}.
$$
Then, 
$$
a_f(\G(n,M)) = a_u(\G(n,M)) = 1.
$$
\end{theorem}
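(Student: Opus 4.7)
The plan is to show $a_u(\G(n,M))=1$ (the equality $a_f=a_u=1$ then follows from $1 \le a_f \le a_u$) by explicitly constructing a unit acquisition protocol that consolidates all weight at a single target vertex $v^*$, exploiting the typical structure of $\G(n,M)$ at the moment of connectivity.

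First I would carry out a structural analysis of $\G(n,M)$. A.a.s., I would establish a decomposition $V = C \sqcup B$, where $B$ is a small exceptional set containing every low-degree vertex (with an appropriately chosen degree threshold) and in particular the pendant created at time $M$; $|B|=n^{o(1)}$; every $b \in B$ has at least one neighbor in $C$; and $G[C]$ is connected, with minimum degree $\Omega(\log n /(\log \log n)^{O(1)})$ and strong expansion. The key point is that the stopping time $M$ leaves the core $G[C]$ looking essentially like a random graph well above its own connectivity threshold, while all $M$-specific fragility is quarantined inside $B$.

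The protocol proceeds in two phases. In Phase (a) I would dump $B$ into $C$: for each $b \in B$ in an arbitrary order, pick a core neighbor $c \in C$ and perform the single unit move $b \to c$. No core weight decreases during this phase, so $w(c) \ge 1$ at the moment of each move and every move is valid. After Phase (a), every $b \in B$ has weight zero and the total weight sitting on $C$ equals $n$, with each core vertex carrying weight at least $1$. In Phase (b), I would consolidate all remaining weight within $G[C]$ onto a single target $v^* \in C$, using only edges of $G[C]$.

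The heart of the proof, and the main obstacle, is Phase (b). I would formulate and prove a general absorption lemma: any graph $H$ with sufficiently high minimum degree and expansion satisfies $a_u(H)=1$ starting from any integer weight distribution bounded below by $1$. My strategy is a layered greedy argument. Designate $v^*$ to be, say, a vertex of maximum initial weight, and grow $w(v^*)$ in rounds, each round absorbing some but not all of the currently-live neighbors of $v^*$, while keeping at least one ``bridge'' neighbor alive. Through the bridge, reach one layer further, accumulate that layer's weight on the bridge, and then transfer the bridge's weight to $v^*$---legal once $w(v^*)$ has grown enough in prior rounds. Iterating, $w(v^*)$ grows geometrically while the live graph shrinks, until only $v^*$ is live. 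The main obstacle will be verifying that this process never stalls: namely, that a suitable bridge always exists and that $w(v^*)$ always dominates the weight that has to pass through the bridge. This is where the expansion of $G[C]$ is essential; a careful potential argument tracking $w(v^*)$ against some measure of the live-frontier size, together with a direct case analysis for the endgame when only $O(1)$ vertices are still live (handled via the minimum-degree bound in $G[C]$), should close the proof.
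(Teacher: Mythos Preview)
Your core/periphery decomposition $V=C\sqcup B$ and Phase~(a) are fine and close in spirit to how the paper isolates low-degree and isolated vertices. The real content is Phase~(b), and your ``layered greedy'' sketch has a genuine gap. After you absorb all live neighbours of $v^*$ except one bridge $b$, then accumulate the next layer onto $b$ and transfer $b$'s weight to $v^*$, the bridge $b$ is dead and so are all the other original neighbours of $v^*$; at that moment $v^*$ is isolated in the live graph and no further move can reach it. If instead you keep $b$ alive with small residual weight and try to route later layers through it, you hit the reverse problem: to move a unit from a layer-2 bridge $c$ (which has just collected $\Theta(\ln n)$ units) to $b$ you need $w(b)\ge w(c)$, but $b$ has already been drained. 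The constraint $w(\text{receiver})\ge w(\text{sender})$ makes a single growing centre with thin bridges unstable, and neither your ``potential argument'' nor the ``endgame case analysis'' is specified enough to repair this; as written the absorption lemma has no proof.

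The paper takes a different route that sidesteps this tension entirely. It builds a rooted spanning tree of $\G(n,M)$ by a modified BFS in which, at every internal (``good'') vertex, a constant fraction $\frac{\delta}{4}\ln n$ of the children are set aside as \emph{whiskers} (permanent leaves), while the rest continue the BFS. This guarantees that each internal vertex on level $k$ has $\gg k$ leaf children of weight~1 available. The protocol then first moves units from whiskers to their parents so that every internal vertex at level $k$ has weight exactly $k$; the tree is now weight-monotone from leaves to root, and every remaining unit can be pushed up one edge at a time to the root. The delicate parts of the paper are not the protocol itself but attaching the last $\delta n$ vertices (and the isolated vertices absorbed at time $M$) to the tree without destroying the ``many spare leaves per internal node'' property, which requires a Hall-type matching argument and careful parent selection for low-degree vertices. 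That preloading-by-leaves idea is the key mechanism you are missing; a bare expansion/min-degree hypothesis on $G[C]$ does not obviously produce it.
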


\medskip

Let $\omega = \omega(n)$ be any function tending to infinity as $n \to \infty$. It is well known that a.a.s.\ $m_- \le M \le m_+$, where
$$
m_{-} = \frac {n}{2} \Big( \ln n - \omega \Big) \qquad \text{ and } \qquad m_{+} = \frac {n}{2} \Big( \ln n + \omega \Big).
$$
The two models, $\G(n, p)$ and $\G(n, m)$, are in many cases asymptotically equivalent, provided $\binom{n}{2}p$ is close to $m$. 
In particular, we get immediately the following corollary. 

\begin{corollary}\label{cor:main}
Let $\omega = \omega(n)$ be any function tending to infinity as $n \to \infty$. Let 
$$
p_{-} = \frac {\ln n - \omega}{n} \qquad \text{ and } \qquad p_{+} = \frac {\ln n + \omega}{n}.
$$
\begin{itemize}
\item If $p \le p_-$, then, a.a.s. $a_u(\G(n,p)) \ge a_f(\G(n,p)) \ge 2$.
\item If $p \ge p_+$, then, a.a.s. $a_u(\G(n,p)) = a_f(\G(n,p)) = 1$.
\end{itemize}
\end{corollary}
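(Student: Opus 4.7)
Corollary~\ref{cor:main} is a straightforward consequence of Theorem~\ref{thm:main} together with classical facts about $\G(n,m)$ and $\G(n,p)$. I would prove the two bullets separately, using the Erd\H{o}s--R\'enyi isolated-vertex result for the lower bound and the theorem together with a standard monotone coupling for the upper bound.

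\textbf{Lower bound, $p \le p_-$.} It is classical that $\G(n,p_-)$ a.a.s.\ has an isolated vertex, and is therefore disconnected. In any disconnected graph, every connected component must retain at least one vertex with positive terminal weight---an acquisition move (unit or fractional) cannot cross components, so weight trapped in a component stays there---so any residual set has size at least two; thus $a_u(\G(n,p_-)) \ge a_f(\G(n,p_-)) \ge 2$ a.a.s. For general $p \le p_-$ the same conclusion holds by the natural monotone coupling $\G(n,p) \subseteq \G(n,p_-)$ in $p$, since being disconnected is preserved under edge deletion.

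\textbf{Upper bound, $p \ge p_+$.} Theorem~\ref{thm:main} gives $a_u(\G(n,M)) = a_f(\G(n,M)) = 1$ a.a.s., where $M$ is the connectivity hitting time. The bound $M \le n(\ln n + \omega/2)/2$ holds a.a.s.\ (since $\omega/2$ still tends to infinity, this is the $m_+$-type bound recalled just before the corollary, with $\omega/2$ in place of $\omega$). A routine Chernoff argument shows that in the standard joint coupling of the $\G(n,m)$ process with $\G(n,p_+)$, one also has $|E(\G(n,p_+))| \ge n(\ln n + \omega/2)/2$ a.a.s.---the expectation $\binom{n}{2}p_+ = (n-1)(\ln n+\omega)/2$ exceeds this threshold by $\Theta(n\omega)$, which dominates the $O(\sqrt{n\ln n})$ fluctuations. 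Combining the two, $\G(n,M) \subseteq \G(n,p_+)$ a.a.s. in this coupling. Since both $a_u$ and $a_f$ are monotone non-increasing under edge addition---every legal acquisition sequence on a spanning subgraph remains legal on the supergraph, as the required edges are still present and the weight-comparison conditions are unaffected---we conclude $a_u(\G(n,p_+)) \le a_u(\G(n,M)) = 1$ a.a.s., and similarly for $a_f$. Monotonicity in $p$ then propagates this bound to all $p \ge p_+$, and combined with the trivial lower bound $a_u \ge a_f \ge 1$ yields the second bullet.

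\textbf{Main obstacle.} All of the mathematical substance of the corollary lies in Theorem~\ref{thm:main}; the corollary itself is essentially a bookkeeping exercise. The only mildly delicate point is the coupling comparison in the upper bound, since the expected edge count $\binom{n}{2}p_+$ is slightly smaller than $m_+$ (by an additive $(\ln n + \omega)/2$). This deficit is harmlessly absorbed by first trading $\omega$ for $\omega/2$ on the $\G(n,m)$ side, which still tends to infinity and hence preserves the a.a.s.\ conclusion, providing a comfortable margin for Chernoff concentration.
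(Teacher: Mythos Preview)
Your proposal is correct and follows essentially the same approach the paper intends: the paper does not give a formal proof of the corollary at all, merely noting that it follows ``immediately'' from Theorem~\ref{thm:main}, the well-known bounds $m_-\le M\le m_+$, and the asymptotic equivalence of $\G(n,p)$ and $\G(n,m)$. You have simply spelled this out explicitly via monotone couplings and the monotonicity of $a_u,a_f$ under edge addition, including the small $\omega\mapsto\omega/2$ cushion to absorb the $O(\ln n)$ discrepancy between $\binom{n}{2}p_+$ and $m_+$; this is exactly the kind of routine bookkeeping the paper is leaving to the reader.
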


The paper is structured as follows. In Section~\ref{sec:prep}, we introduce the notation and then prove some useful properties of the $\G(n,p)$ model. The following section, Section~\ref{sec:proof}, is devoted to the proof of the main result.

\section{Notation and Preliminaries}\label{sec:prep}

In this section we give a few preliminary results that will be useful for the proof of our main result, Theorem~\ref{thm:main}. First, we introduce standard asymptotic notation, then we state a specific instance of Chernoff's bound that we will find useful. Finally, we prove some simple and well-known properties that $\G(n,p)$ has that will be used in the proof of the main result.

\subsection{Notation and Convention}\label{sub:notation}
 
Given two functions $f=f(n)$ and $g=g(n)$, we will write $f=O(g)$ if there exists an absolute constant $c$ such that $f
\leq cg$ for all $n$, $f=\Omega(g)$ if $g=O(f)$, $f=\Theta(g)$ if $f=O(g)$ and $f=\Omega(g)$, and we write $f=o(g)$ or $f\ll g$ if the limit $\lim_{n\to\infty} f/g=0$. In addition, we write $f=\omega(g)$ or $f\gg g$ if $g=o(f)$, and unless otherwise specified, $\omega$ will denote an arbitrary function that is $\omega(1)$, assumed to grow slowly. We also will write $f\sim g$ if $f=(1+o(1))g$. 

Through the paper, as typical in the field of random graphs, for expressions that clearly have to be an integer, we round up or down but do not specify which: the choice of which does not affect the argument. 

\subsection{Chernoff's Bound}

We will use the following consequence of Chernoff's bound (see, for example,~\cite{JLR} or~\cite{AS}).

\begin{lemma}
If $X$ is a Binomial random variable $\text{Bin}(k,q)$ with expectation $\mu=kq$, and $0<\eps<1$, then 
$$\Pr[X < (1-\eps)\mu] \le \exp \left( -\frac{\eps^2 \mu}{2} \right),$$ 
and if $\eps > 0$, then
\[\Pr\sqbs{X > (1+\eps)\mu} \le \exp\of{-\frac{\eps^2 \mu}{2+\eps}}.\]
\end{lemma}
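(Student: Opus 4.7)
The plan is the classical exponential-moment (Chernoff) argument, in which both tail bounds fall out of a single moment generating function estimate. Write $X=\sum_{i=1}^{k} X_i$ with the $X_i$ independent $\text{Bernoulli}(q)$ indicators, and apply Markov's inequality to $e^{tX}$: for $t>0$, $\Pr[X\ge a]\le e^{-ta}\E[e^{tX}]$, and for $t<0$, $\Pr[X\le a]\le e^{-ta}\E[e^{tX}]$. By independence and the elementary inequality $1+x\le e^x$ applied with $x=q(e^t-1)$,
\[
\E[e^{tX}] \;=\; (1-q+qe^t)^k \;\le\; \exp\bigl(kq(e^t-1)\bigr) \;=\; \exp\bigl(\mu(e^t-1)\bigr),
\]
reducing each tail to a one-parameter optimization over $t$.

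For the upper tail I would set $a=(1+\eps)\mu$ and pick the optimal $t=\ln(1+\eps)>0$, yielding
\[
\Pr[X\ge (1+\eps)\mu] \;\le\; \exp\bigl(\mu\bigl(\eps-(1+\eps)\ln(1+\eps)\bigr)\bigr),
\]
so the claim reduces to the real-variable inequality $(1+\eps)\ln(1+\eps)-\eps \ge \eps^2/(2+\eps)$ for all $\eps>0$. I would verify this by the routine check that both sides vanish at $\eps=0$ and that the derivative of the left side dominates the derivative of the right on $(0,\infty)$; the comparison ultimately reduces, after one more differentiation, to the purely algebraic inequality $(2+\eps)^3 \ge 8(1+\eps)$, which is obvious from matching values at $\eps=0$ and comparing slopes. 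For the lower tail I would symmetrically take $a=(1-\eps)\mu$ and $t=\ln(1-\eps)<0$, which gives $\Pr[X\le (1-\eps)\mu]\le \exp\bigl(\mu(-\eps-(1-\eps)\ln(1-\eps))\bigr)$ and hence reduces the claim to $(1-\eps)\ln(1-\eps)+\eps \ge \eps^2/2$ on $(0,1)$; here the derivative check is cleaner still, collapsing after one differentiation to $-\ln(1-\eps)\ge \eps$, which is itself just $1+x\le e^x$ with $x=-\eps$.

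The probabilistic content is entirely captured by the moment generating function bound $\E[e^{tX}]\le \exp(\mu(e^t-1))$ combined with Markov, so there is no real obstacle; the main work is the bookkeeping in the two resulting calculus inequalities. The asymmetry between the constants $1/2$ (lower tail) and $1/(2+\eps)$ (upper tail) is forced because the lower tail need only hold for $0<\eps<1$, whereas the upper tail has to remain valid for arbitrarily large $\eps$, where $(1+\eps)\ln(1+\eps)-\eps$ grows only like $\eps\ln\eps$ rather than like $\eps^2$ and hence cannot be controlled by a constant multiple of $\eps^2/\mu$ alone.
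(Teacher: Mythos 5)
Your proposal is correct. The paper does not actually prove this lemma---it is stated as a known consequence of Chernoff's bound with a citation to~\cite{JLR} and~\cite{AS}---and your argument (Markov applied to $e^{tX}$, the bound $\E[e^{tX}]\le\exp(\mu(e^t-1))$, the optimal choices $t=\ln(1\pm\eps)$, and the two calculus inequalities $(1+\eps)\ln(1+\eps)-\eps\ge\eps^2/(2+\eps)$ and $(1-\eps)\ln(1-\eps)+\eps\ge\eps^2/2$) is precisely the standard proof given in those references; note also that bounding the non-strict events $\Pr[X\ge(1+\eps)\mu]$ and $\Pr[X\le(1-\eps)\mu]$ suffices, since they dominate the strict ones in the statement.
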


\subsection{Typical Properties}

Let $\omega = \omega(n)$ be any function tending to infinity as $n \to \infty$ arbitrarily slowly. In particular, for convenience we will assume that $\omega = o(\ln \ln n)$. Recall that $p_- = p_-(n) = (\ln n - \omega)/n$.  

\medskip

Let us first show that the maximum degree $\Delta$ of $\G(n,p_-)$ is not too far from the average degree that is asymptotic to $np_- \sim \ln n$. This is a well-known result but we prove it for completeness. 

\begin{lemma}\label{lem:max_degree}
A.a.s.\ $\Delta(\G(n,p_-)) \le 4 \ln n$.
\end{lemma}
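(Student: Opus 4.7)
The plan is to apply the upper tail form of Chernoff's bound vertex-by-vertex and then conclude via a simple union bound. For any fixed vertex $v \in [n]$, its degree $\deg(v)$ in $\G(n,p_-)$ is distributed as $\Bin(n-1, p_-)$ with expectation $\mu = (n-1)p_- \le \ln n$, and in fact $\mu \sim \ln n$ since $\omega = o(\ln \ln n) = o(\ln n)$.

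Next, I would write $4 \ln n = (1+\eps)\mu$ so that $\eps = 4\ln n/\mu - 1$. Because $\mu \sim \ln n$, we have $\eps \to 3$ as $n \to \infty$; in particular $\eps \ge 3 - o(1)$, and certainly $\eps \ge 2$ for $n$ large enough (to avoid any fussing about small-$n$ corners). The upper-tail Chernoff bound from the lemma then gives
\[
\Pr[\deg(v) > 4 \ln n] \;=\; \Pr\sqbs{\deg(v) > (1+\eps)\mu} \;\le\; \exp\of{-\frac{\eps^2 \mu}{2+\eps}}.
\]
Plugging in $\eps \to 3$ and $\mu \sim \ln n$ yields
\[
\Pr[\deg(v) > 4 \ln n] \;\le\; \exp\of{-(9/5 - o(1)) \ln n} \;=\; n^{-9/5 + o(1)}.
\]

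Finally, a union bound over the $n$ choices of $v$ gives
\[
\Pr[\Delta(\G(n,p_-)) > 4 \ln n] \;\le\; n \cdot n^{-9/5 + o(1)} \;=\; n^{-4/5 + o(1)} \;=\; o(1),
\]
which is the desired conclusion. There is no real obstacle here: the bound $4 \ln n$ is chosen generously precisely so that the Chernoff exponent beats the union-bound factor of $n$ with room to spare. The only minor point to double-check is that the condition on $\omega$ guarantees $\mu \sim \ln n$ rather than just $\mu \le \ln n$, so that $\eps$ is bounded below by a constant larger than $1$ and the Chernoff exponent does indeed exceed $\ln n$ asymptotically.
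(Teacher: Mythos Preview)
Your proof is correct and essentially identical to the paper's: both compute $\mu=(n-1)p_-\sim \ln n$, set $\eps = 4\ln n/\mu - 1 \sim 3$, apply the upper-tail Chernoff bound to get a per-vertex failure probability of $\exp(-(9/5+o(1))\ln n)=o(n^{-1})$, and then sum over all $n$ vertices. The only cosmetic difference is that the paper phrases the last step as Markov's inequality on the expected number of high-degree vertices, whereas you phrase it as a union bound---these are the same calculation.
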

\begin{proof}
Let $v$ be any vertex in $\G(n,p_-)$. Since $\deg(v)$ is the binomial random variable $\text{Bin}(n-1,p_-)$ with expectation $\mu = (n-1)p_- \sim \ln n$, we get from Chernoff's bound applied with $\eps = 4 \ln n / \mu - 1 \sim 3$ that 
\begin{eqnarray*}
\Pr \big( \deg(v) > 4 \ln n \big) &=& \Pr \Big( \text{Bin} (n-1, p) > (1+\eps) \mu \Big) \\
&\le& \exp \left( - \left( \frac {9}{5} + o(1) \right) \ln n \right) = o(n^{-1}).
\end{eqnarray*}
Hence the expected number of vertices of degree larger than $4 \ln n$ is $o(1)$ and the lemma holds by Markov's inequality. 
\end{proof}

We will also need an upper bound for the number of vertices of a given degree $k \in \N \cup \{0\}$. Note that $k$ is fixed, not a function of $n$.

\begin{lemma}\label{lem:k-degree}
Let $k \in \N \cup \{0\}$. A.a.s.\ the number of vertices of degree $k$ in $\G(n,p_-)$ is at most $(\ln n)^{k+o(1)} \le (\ln n)^{k+1}$. 
\end{lemma}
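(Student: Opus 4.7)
The plan is to compute the probability that a fixed vertex $v$ has degree exactly $k$, multiply by $n$ to obtain the expected count, and then apply Markov's inequality to transfer this expectation bound into an a.a.s.\ bound.

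First I would write, for any fixed vertex $v \in [n]$,
\[
\Pr(\deg(v) = k) = \binom{n-1}{k} p_-^k (1-p_-)^{n-1-k}.
\]
Since $k$ is a constant, $\binom{n-1}{k} \sim n^k/k!$; combined with $p_-^k = (\ln n - \omega)^k/n^k \sim (\ln n)^k/n^k$, the prefactor is asymptotic to $(\ln n)^k/k!$. For the exponential factor, using $\ln(1-p_-) = -p_- + O(p_-^2)$, I would obtain $(1-p_-)^{n-1-k} \sim \exp(-(n-1)p_-) \sim e^{-(\ln n - \omega)} = e^\omega/n$.

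Multiplying gives $\Pr(\deg(v)=k) \sim (\ln n)^k e^\omega/(k!\, n)$, hence by linearity
\[
\E[X_k] \sim \frac{(\ln n)^k e^\omega}{k!},
\]
where $X_k$ counts vertices of degree $k$ in $\G(n,p_-)$. Since the paper's convention is that $\omega = o(\ln \ln n)$, we have $e^\omega = e^{o(\ln \ln n)} = (\ln n)^{o(1)}$, so $\E[X_k] = (\ln n)^{k+o(1)}$.

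Finally, I would apply Markov's inequality with a slowly growing factor, e.g.\ with $f(n) = \ln \ln n$: since $\Pr(X_k \ge f(n)\E[X_k]) \le 1/f(n) = o(1)$, a.a.s.\ $X_k \le f(n)\,\E[X_k] = (\ln n)^{k+o(1)}$, and in particular at most $(\ln n)^{k+1}$ for $n$ large. None of the steps looks delicate; the only minor care point is to ensure $\omega = o(\ln \ln n)$ is used to absorb the $e^\omega$ factor into $(\ln n)^{o(1)}$, which is exactly why the paper made that convention in the preceding paragraph.
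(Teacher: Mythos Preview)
Your proof is correct and matches the paper's own argument essentially line for line: compute $\Pr(\deg(v)=k)\sim (\ln n)^k e^{\omega}/(k!\,n)$, take the expectation, and apply Markov's inequality with a slowly growing factor absorbed into $(\ln n)^{o(1)}$ via the assumption $\omega=o(\ln\ln n)$. The only cosmetic difference is that the paper uses $\omega$ itself as the Markov multiplier while you use $\ln\ln n$; both work for the same reason.
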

\begin{proof}
Let us concentrate on any vertex $v \in [n]$. Since by Taylor expansion $1-p = \exp(-p + O(p^2))$, we get that
\begin{eqnarray*}
\Pr \Big( \deg(v) = k \Big) &=& \binom{n-1}{k} \, p_-^k \, \Big(1-p_-\Big)^{n-1-k}\\
&\sim& \frac {n^k}{k!} \left( \frac {\ln n}{n} \right)^k \exp \left( - p_-n + O(p_-^2n) \right)  \\
&\sim& \frac {(\ln n)^k}{k!} \exp \left( - (\ln n - \omega) \right)  \\
&=& \frac {(\ln n)^k e^\omega}{k! \, n}.
\end{eqnarray*}
Hence, the expected number of vertices of degree $k$ is asymptotic to $(\ln n)^k e^{\omega} / k!$. It follows from Markov's inequality that a.a.s.\ the number of vertices of degree $k$ is at most $(\ln n)^k e^{\omega} \omega / k! = (\ln n)^{k+o(1)} \le (\ln n)^{k+1}$, since it is assumed that $\omega = o(\ln \ln n)$. The desired property holds.
\end{proof}

\section{The proof of the main result}\label{sec:proof}

Let us fix $\delta = 0.06$. This parameter is carefully tuned for the argument to hold. It cannot be too large nor too small. We will highlight the two places in the proof where the specific numerical value matters. 
Let $\omega = \omega(n)$ be any function tending to infinity (arbitrarily slowly) as $n \to \infty$. In particular, as before, for convenience we will assume that $\omega = o(\ln \ln n)$. 

Recall that $p_- = p_-(n) = (\ln n - \omega)/n$. 
It is well-known that $\G(n,p_-)$ is disconnected a.a.s. In fact, it is known that a.a.s.\ $\G(n,p_-)$ has the giant component that consists of almost all vertices; the remaining components are isolated vertices. (This fact will also follow from our proof.) As a result, the two models, $\G(n,p)$ and $\G(n,m)$, can be coupled such that $\G(n,p_-)$ is a subgraph of $\G(n,M)$. It is easier to work with $\G(n,p_-)$ rather than with $\G(n,M)$ since in $\G(n,p_-)$ edges occur independently. As a result, we will mostly use the former moving to the latter only for a brief moment to deal with isolated vertices that are present in $\G(n,p_-)$. 

\subsection{Big Picture}

Our strategy is to build a rooted spanning tree of the giant component of $\G(n,p_-)$. We do it in a few phases. We first build a tree that spans roughly $(1-\delta)n$ vertices that form set $\mathcal{T}$ (Subsection~\ref{sec:tree}). The remaining vertices form set $\mathcal{R}$ that needs to be carefully partitioned to prepare it to be attached to the tree (Subsection~\ref{sec:partitioning}). At this point we move our attention to isolated vertices (Subsection~\ref{sec:isolated_vertices}). We continue the Erd\H{o}s-R\'enyi random graph process ignoring all incoming edges unless they are incident to one of the isolated vertices. At time $M$ when $\G(n,m)$ becomes connected, all isolated vertices from $\G(n,p_-)$ are adjacent to at least one neighbour in the giant component of $\G(n,p_-)$. At this point, we come back to building the spanning tree of the giant component of $\G(n,p_-)$ and carefully attach the remaining vertices to the tree (Subsection~\ref{sec:remaining_vertices}). The rooted spanning tree of $\G(n,M)$ is then formed and it remains to show that there exists an acquisition protocol that uses only edges of the tree and results in a residual set consisting of the root (Subsection~\ref{sec:acquisition}).

\medskip

The argument is fairly long and in a few places quite delicate. In order to help the reader follow it, we highlight the most important conclusions as independent claims. 

\subsection{Building a tree on at least $(1-\delta)n$ vertices}\label{sec:tree}

Let us start with any vertex $v_0$ in $\G(n,p_-)$ that will become the \textbf{root} of the final spanning tree. We will apply the following ``\textbf{breadth first search}'' (\textbf{BFS}) type algorithm to build a tree. The vertex set $[n]$ will always be partitioned into two sets: vertices that are \textbf{discovered} and \textbf{non-discovered}. We initiate the process by putting $v_0$ into the \textbf{queue} $Q$ and assigning the status discovered to $v_0$; the remaining vertices are non-discovered. In each step of the process, we remove vertex $v$ from the queue $Q$ and expose edges from $v$ to all non-discovered vertices. These new neighbours of $v$ (non-discovered vertices at this point) are called \textbf{children} of $v$ and $v$ itself is a \textbf{parent} for them. We will additionally label $v$ as \textbf{good} if it has at least $\frac {\delta}{2} \ln n$ children; otherwise, $v$ is labelled as \textbf{bad}. If vertex $v$ is good, then we arbitrarily select $\frac {\delta}{4} \ln n$ of its children and label them as \textbf{good whiskers}. The remaining children (there are at least $\frac {\delta}{4} \ln n$ of them) are put into the queue $Q$. After that all children of $v$ change their status to discovered. On the other hand, if vertex $v$ is bad, then we label all of its children as \textbf{bad whiskers} and change their status to discovered. Note that whiskers (regardless whether good or bad) are not put into the queue and so they will become leaves in the tree. We continue the process until the number of non-discovered vertices drops below $\delta n$ or $Q$ becomes prematurely empty. 

\medskip

Consider any vertex $v$ that was removed from the queue $Q$ at some point of the process. Since during the entire process the number of non-discovered vertices is always at least $\delta n$, the number of children of $v$ is stochastically bounded from below by the binomial random variable $\text{Bin}(\delta n,p_-)$ with expectation $\mu \sim \delta n p_- \sim \delta \ln n$. It follows from Chernoff's bound applied with $\eps = 1 - \delta \ln n / (2 \mu) \sim 1/2$ that
\begin{eqnarray} 
\Pr ( v \text{ is bad}) &\le& \Pr \left( \text{Bin}(\delta n, p_-) < \frac {\delta}{2} \ln n \right) =  \Pr \Big( \text{Bin}(\delta n, p_-) < (1-\eps)\mu \Big)  \nonumber \\ 
&\le& \exp \left( - \left( \frac {1}{8} + o(1) \right) \mu \right)= \exp \left( - \left( \frac {\delta}{8} + o(1) \right) \ln n \right) \nonumber \\ 
&\le& n^{-\delta/9}. \label{eq:pr_bad}
\end{eqnarray}
In particular, since $n^{-\delta/9} = o(1)$, we get the following observation.

\begin{claim}\label{claim:root_is_good}
A.a.s.\ the root $v_0$ is good.
\end{claim}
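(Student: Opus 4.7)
The plan is to apply Chernoff's lower-tail bound directly to the degree of $v_0$; the calculation is essentially already performed at~(\ref{eq:pr_bad}). When $v_0$ is removed from the queue at the very first step of the BFS, all other $n-1$ vertices are still non-discovered, so the children of $v_0$ in the tree are precisely the neighbours of $v_0$ in $\G(n,p_-)$. In particular, the number of children of $v_0$ is distributed as $\text{Bin}(n-1,p_-)$ with expectation $\mu = (n-1)p_- \sim \ln n$.

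To prove the claim it suffices to show that $\deg(v_0) \ge \frac{\delta}{2}\ln n$ a.a.s. Setting $\eps = 1 - \frac{\delta \ln n}{2\mu}$, so that $\eps \to 1 - \delta/2 > 0$, Chernoff's inequality yields
$$\Pr\!\left(\deg(v_0) < \tfrac{\delta}{2}\ln n\right) \le \exp\!\left(-\tfrac{\eps^2 \mu}{2}\right) = n^{-(1-\delta/2)^2/2 + o(1)} = o(1),$$
whence $v_0$ is good a.a.s. This is simply a strengthening of the generic estimate $\Pr(v \text{ bad}) \le n^{-\delta/9}$ from~(\ref{eq:pr_bad}): that bound used the worst-case expectation $\mu \sim \delta \ln n$ (arising when as few as $\delta n$ vertices remain non-discovered), while for the root we have $\mu \sim \ln n$, a factor of $1/\delta$ larger, yielding a much stronger exponent. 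There is no genuine obstacle here---the claim is an immediate specialization of the Chernoff computation already displayed, and the only reason to single it out is that the BFS cannot be launched at all unless the root itself is good.
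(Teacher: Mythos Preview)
Your proof is correct and follows essentially the same approach as the paper. The paper simply observes that the generic bound~(\ref{eq:pr_bad}) already gives $\Pr(v_0\text{ is bad})\le n^{-\delta/9}=o(1)$, without bothering to sharpen the exponent; you redo the Chernoff computation with the exact parameters available at the first BFS step (all $n-1$ vertices still non-discovered), obtaining a stronger but unnecessary bound.
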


Consider now any good vertex $v$. By Lemma~\ref{lem:max_degree}, we may assume that $v$ has at most $4 \ln n$ children. Using~(\ref{eq:pr_bad}) we get that the probability that $v$ has at least $\lceil 10 / \delta \rceil$ bad children is at most 
\begin{eqnarray*}
\binom{4 \ln n}{\lceil 10 / \delta \rceil} \Pr ( v \text{ is bad})^{\lceil 10 / \delta \rceil} &\le& \left( \frac {4 e \ln n}{ \lceil 10 / \delta \rceil n^{\delta/9} } \right)^{ \lceil 10 / \delta \rceil }\\ 
&=& n^{ -\lceil 10 / \delta \rceil \, \delta / 9 +o(1) } \le n^{-10/9 + o(1)} = o(n^{-1}).
\end{eqnarray*}
Trivially, the number of good vertices is at most $n$. We get that the expected number of good vertices with many bad children tends to zero and so the following claim holds.

\begin{claim}\label{claim:good-vertices}
A.a.s.\ no good vertex has at least $\lceil 10 / \delta \rceil$ bad children. 
\end{claim}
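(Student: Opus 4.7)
The plan is to take a union bound over all vertices, bounding for each $v$ the probability that $v$ is good yet has at least $k:=\lceil 10/\delta\rceil$ bad children. The two main ingredients are already in hand: the maximum-degree cap from Lemma~\ref{lem:max_degree} and the per-vertex estimate $\Pr(u \text{ is bad})\le n^{-\delta/9}$ established in~(\ref{eq:pr_bad}). The specific choice $k=\lceil 10/\delta\rceil$ is tuned precisely so that the expected count comes out $o(1)$: we need $\binom{4\ln n}{k}(n^{-\delta/9})^k = o(n^{-1})$, i.e.\ $k\delta/9 > 1$.

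I would first condition on the a.a.s.\ event $\Delta(\G(n,p_-))\le 4\ln n$, so that every vertex has at most $4\ln n$ children in the BFS tree. Fix a vertex $v$ and a subset $S$ of $k$ of its children. The joint event that every $c\in S$ is bad concerns only the edges exposed at the moment $c$ is popped from $Q$, namely the edges from $c$ to the then-non-discovered vertices. The main obstacle is that these ``bad'' events are not literally independent across different children, because the non-discovered set shrinks over time in a way that depends on earlier outcomes. Two features of the process rescue the argument: by construction the BFS halts before the non-discovered set drops below $\delta n$, so the bound~(\ref{eq:pr_bad}) applies uniformly at every popping step; and the edges exposed when $c_i$ is popped are disjoint from those exposed when $c_j$ is popped for $i\ne j$. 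Conditioning on the BFS history up to the processing of each $c\in S$ and iterating via the tower property therefore gives
\[
\Pr\big(\text{every } c\in S \text{ is bad}\big)\le (n^{-\delta/9})^k.
\]

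A union bound over the at most $\binom{4\ln n}{k}$ choices of $S$ yields
\[
\Pr\big(v\text{ is good with at least }k\text{ bad children}\big)\le \binom{4\ln n}{k}(n^{-\delta/9})^k = n^{-k\delta/9+o(1)}\le n^{-10/9+o(1)},
\]
which is $o(n^{-1})$. Summing over $v\in [n]$, the expected number of offending vertices is $o(1)$, and Markov's inequality finishes the proof. The only delicate point is the independence/decoupling step in the middle paragraph; everything else is a routine combination of Chernoff's bound, Lemma~\ref{lem:max_degree}, and the standard first-moment method.
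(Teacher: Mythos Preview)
Your proof is correct and follows essentially the same route as the paper: condition on the maximum-degree bound from Lemma~\ref{lem:max_degree}, use the uniform estimate~(\ref{eq:pr_bad}) together with a union bound over the $\binom{4\ln n}{\lceil 10/\delta\rceil}$ choices of children, and finish with a union bound over vertices and Markov's inequality. The only difference is that you spell out the tower-property justification for multiplying the per-child bad probabilities, which the paper leaves implicit.
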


Recall that, by definition, each good vertex has at least $\frac {\delta}{2} \ln n$ children but only $\frac {\delta}{4} \ln n$ of them are good whiskers. Hence, by Claim~\ref{claim:good-vertices}, we may assume that all good vertices have at least $\frac {\delta}{4} \ln n - O(1) \ge \frac {\delta}{5} \ln n$ good children (unless the BFS process stops naturally once $(1-\delta)$ fraction of all vertices are discovered). Combining it with Claim~\ref{claim:root_is_good} we get the next observation. 

\begin{claim}
A.a.s.\ the BFS process does not finish prematurely, that is, the queue $Q$ never becomes empty and the process ends naturally once the number of non-discovered vertices drops below $\delta n$.
\end{claim}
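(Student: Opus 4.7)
The plan is to track a lower bound on the number of ``good'' vertices currently sitting in the queue $Q$ at any moment of the BFS and show that this count stays strictly positive until the natural stopping condition triggers. First, I would invoke Claim~\ref{claim:root_is_good} so that the queue starts with one good vertex, namely $v_0$. Combining Claim~\ref{claim:good-vertices} with the definition of a good vertex, as highlighted in the paragraph preceding the statement, every good vertex $v$ that is processed contributes at least $\frac{\delta}{5}\ln n$ children that will themselves turn out to be good, and all of these enter $Q$ rather than becoming whiskers.

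The key step is a simple deterministic bookkeeping argument conditional on the events of the two previous claims. Let $P$ be the number of good vertices that have been dequeued by some moment in the BFS. Each such dequeueing removes one good vertex from $Q$ but places at least $\frac{\delta}{5}\ln n$ new good ones into $Q$, for a net gain of at least $\frac{\delta}{5}\ln n - 1$ good vertices. Dequeueing a bad vertex, on the other hand, leaves the good-vertex count in $Q$ unchanged, since bad vertices turn all of their children into bad whiskers and add nothing to $Q$. Therefore the number of good vertices currently in $Q$ is at least
\[
1 + P\left(\frac{\delta}{5}\ln n - 1\right) \ge 1
\]
for $n$ sufficiently large. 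In particular, $Q$ can never become empty.

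Since $Q$ stays non-empty throughout the process, the BFS cannot terminate via the ``$Q$ empties'' clause, so it must terminate via the other stopping condition, namely that the number of non-discovered vertices drops below $\delta n$. That is precisely the statement. The one subtle point I would verify is that the estimate of $\frac{\delta}{5}\ln n$ good children per good vertex is available at every step of the process. This is fine because the probability estimates behind Claim~\ref{claim:good-vertices} and behind~(\ref{eq:pr_bad}) use the lower tail bound $\mathrm{Bin}(\delta n, p_-)$, which is valid as long as the pool of non-discovered vertices has size at least $\delta n$, and that is exactly the regime the BFS is in before it halts. Beyond this sanity check there is no real obstacle; all the probabilistic content has been absorbed into the earlier claims, and what remains is the queue-growth accounting above.
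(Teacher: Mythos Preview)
Your proposal is correct and follows the same route as the paper: the claim is derived directly from Claim~\ref{claim:root_is_good} and Claim~\ref{claim:good-vertices} (via the observation that every good vertex has at least $\frac{\delta}{5}\ln n$ good children), and all probabilistic content is indeed absorbed into those earlier claims. The paper's own justification is just the single sentence ``Combining it with Claim~\ref{claim:root_is_good} we get the next observation''; your queue-accounting inequality $1 + P(\frac{\delta}{5}\ln n - 1) \ge 1$ simply makes explicit the growth argument that the paper leaves to the reader, and your final paragraph correctly handles the one subtlety (that the $\mathrm{Bin}(\delta n,p_-)$ lower bound is valid precisely in the regime before the natural stopping rule fires).
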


It follows from this claim that when the BFS process stops the queue $Q$ is not empty. We will label the vertices in the queue as good whiskers. For convenience, let us compile a list of properties of the tree we just created. 

\begin{definition}
The vertex set $[n]$ is partitioned into two sets: $\mathcal{T}$ (vertices of the BFS tree) is the set of discovered vertices and $\mathcal{R}$ (remaining vertices) is the set of non-discovered vertices. 
\end{definition}

\begin{claim}\label{claim:summary_T}
The following properties hold a.a.s.:
\begin{enumerate}
\item $|\mathcal{T}| = (1-\delta)n + O(\ln n) \sim (1-\delta) n$.
\item $|\mathcal{R}| = \delta n + O(\ln n) \sim \delta n$.
\item There are $O(n^{1-\delta/9}) = o(n)$ bad vertices.
\item There are $O(n^{1-\delta/9} \ln n) = o(n)$ bad whiskers.
\item There are $\Theta(n / \ln n) = o(n)$ good vertices. Each good vertex has at least $\frac {\delta}{4} \ln n$ good whiskers.
\item There are $(1-\delta+o(1))n \sim |\mathcal{T}|$ good whiskers, that is, almost all vertices of $\mathcal{T}$ are good whiskers.
\item All edges between good/bad vertices and $\mathcal{R}$ are exposed (and no edge was found). 
\item No edge between good/bad whiskers and $\mathcal{R}$ is exposed.
\item The height of the rooted tree on vertices from $\mathcal{T}$ has height $(1+o(1)) \ln n / \ln \ln n$.
\end{enumerate}
\end{claim}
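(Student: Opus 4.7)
The plan is to dispatch the nine items roughly in the order listed, leaning throughout on the bound~(\ref{eq:pr_bad}) and Lemma~\ref{lem:max_degree} and exploiting the invariant $|\mathcal{R}| \ge \delta n$ maintained by the stopping rule. For (i) and (ii), I would first observe that the process halts as soon as the non-discovered set drops below $\delta n$; since by Lemma~\ref{lem:max_degree} we may assume $\Delta(\G(n,p_-)) \le 4\ln n$, the last batch of newly discovered children has size at most $4\ln n$, so $\delta n - 4\ln n \le |\mathcal{R}| \le \delta n$ and $|\mathcal{T}| = n - |\mathcal{R}| = (1-\delta)n + O(\ln n)$.

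For (iii), since (\ref{eq:pr_bad}) holds conditional on any history with $|\mathcal{R}| \ge \delta n$, it applies uniformly at every queue pull, giving expected number of bad vertices at most $n \cdot n^{-\delta/9} = n^{1-\delta/9}$; Markov's inequality then yields the advertised a.a.s.\ bound (up to a negligible slowly growing factor absorbed into $o(n)$). Multiplying by $\Delta \le 4\ln n$ gives (iv). For (v)--(vi), let $G, B, W_g, W_b$ denote the numbers of good vertices, bad vertices, good whiskers (including the leftover queue, relabeled at termination), and bad whiskers, so that $G + B + W_g + W_b = |\mathcal{T}|$. Good vertices have between $\frac{\delta}{2}\ln n$ and $4\ln n$ children while bad ones have fewer than $\frac{\delta}{2}\ln n$, so combining (i), (iii), and the identity $|\mathcal{T}| - 1 = \sum_{v\text{ processed}} c_v$ pins $G$ in the range $\Theta(n/\ln n)$; the lower bound of $\frac{\delta}{4}\ln n$ good whiskers per good vertex holds by construction. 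Finally $G + B + W_b = o(n)$ forces $W_g = (1-\delta+o(1))n$, which is (vi).

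Parts (vii) and (viii) are really bookkeeping about what BFS exposes. When a vertex $v$ is pulled from the queue, the algorithm inspects every potential edge from $v$ to a currently non-discovered vertex; vertices in $\mathcal{R}$ are non-discovered throughout, so every $v$-to-$\mathcal{R}$ edge has indeed been examined, and its absence follows because any present edge would have moved its endpoint out of $\mathcal{R}$. Whiskers, in contrast, are never pulled from the queue, and no other step of the algorithm ever looks at an edge between a whisker and a non-discovered vertex; this yields (viii). Preserving this independent randomness on edges between whiskers and $\mathcal{R}$ is the whole purpose of separating off whiskers, and it is what the later subsections will rely on.

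For (ix), the lower bound is immediate: the branching factor in the tree is at most $\Delta \le 4\ln n$, so a height-$h$ rooted tree has at most $\sum_{i=0}^{h}(4\ln n)^i \le 2(4\ln n)^h$ vertices, and $|\mathcal{T}| = \Theta(n)$ forces $h \ge (1+o(1))\ln n/\ln\ln n$. For the matching upper bound I would use that FIFO BFS processes levels in order, and by Claim~\ref{claim:good-vertices} each good vertex at depth $d$ contributes at least $\frac{\delta}{5}\ln n$ good children at depth $d+1$, so the number of good vertices at depth $d$ grows by a factor at least $\frac{\delta}{5}\ln n$ per level; this geometric growth cannot continue past depth $(1+o(1))\ln n/\ln\ln n$ since the tree only has $O(n)$ vertices. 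The main delicate point throughout is that the probability estimates have to be uniform in the stochastic history of the BFS and must accommodate the $o(n)$ sprinkling of bad vertices along any root-to-leaf path, but both difficulties are defused by the uniformity of~(\ref{eq:pr_bad}) together with the $|\mathcal{R}| \ge \delta n$ invariant guaranteed by the stopping rule.
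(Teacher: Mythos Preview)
Your proposal is correct and follows essentially the same route as the paper: (i)--(ii) via the stopping rule plus $\Delta \le 4\ln n$, (iii)--(iv) via the uniform bound~(\ref{eq:pr_bad}) and the degree cap, (v)--(vi) by the partition $|\mathcal{T}| = G+B+W_g+W_b$ and the per-good-vertex whisker count, (vii)--(viii) by inspection of what BFS exposes, and (ix) by sandwiching the branching factor between $\frac{\delta}{5}\ln n$ and $4\ln n$. The only cosmetic difference is that the paper invokes Chernoff (via a stochastic-domination coupling) rather than Markov for (iii), obtaining the clean $O(n^{1-\delta/9})$ without the extra slowly-growing factor; your Markov version is fine for everything downstream since every later use of this count absorbs an $n^{o(1)}$ slack anyway.
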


Since a.a.s.\ the maximum degree in $\G(n,p_-)$ is at most $4 \ln n$ (Lemma~\ref{lem:max_degree}), when the process stops the number of non-discovered vertices drops below $\delta n$ but it is very close to that value. Part~(i) and~(ii) follow. In order to see part~(iii), note that by~(\ref{eq:pr_bad}), the expected number of bad vertices is at most $n^{1-\delta/9}$ and it follows from Chernoff's bound that a.a.s.\ it is at most, say, $2n^{1-\delta/9}$. Part~(iv) follows again from the observation on the maximum degree. Since the number of bad vertices/whiskers is negligible, almost all vertices of $\mathcal{T}$ are either good vertices or good whiskers. By definition, each good vertex has at least $\frac {\delta}{4} \ln n$ good whiskers. Combining these two observations, we get that in fact almost all vertices of $\mathcal{T}$ are good whiskers, part~(vi), and so there are $\Theta(n / \ln n)$ good vertices, part (v). Parts~(vii) and~(viii) follow immediately from the definition of the process. Finally, let us estimate the height of the tree on vertices from $\mathcal{T}$. As pointed out earlier, we may assume that each good vertex has at least $\frac {\delta}{5} \ln n$ good children (except the last few levels of the tree). Hence, the height of the tree is at most
$$
\frac {\ln n}{\ln (\frac {\delta}{5} \ln n)} + O(1) = \frac {\ln n}{\ln \ln n-\ln(5/\delta)} + O(1) \sim \frac {\ln n}{\ln \ln n}.
$$
On the other hand, since the maximum degree in $\G(n,p_-)$ is a.a.s.\ $4 \ln n$, we get an asymptotic matching lower bound and so part~(ix) follows.
 
\subsection{Partitioning vertices of $\mathcal{R}$}\label{sec:partitioning}

It will be very important in which order we expose the remaining edges of $\G(n,p_-)$. Moreover, we will often expose only partial information; for example, we may want to reveal the degree of a given vertex without exposing where its neighbours actually are. 

\medskip

For each vertex $v \in \mathcal{R}$ we expose $\deg_{\mathcal{T}}(v)$, the number of neighbours in $\mathcal{T}$. If $\deg_{\mathcal{T}}(v) \ge \frac {1-\delta}{1.01} \ln n$, then $v$ is called \textbf{high degree} vertex. If $\deg_{\mathcal{T}}(v) \le \lceil 10/\delta \rceil$, then $v$ is called \textbf{low degree} vertex. The remaining vertices of $\mathcal{R}$ are called \textbf{medium degree} vertices. 
By Claim~\ref{claim:summary_T}~(vi--viii), $\deg_{\mathcal{T}}(v) \in \Bin(w,p_-)$, where $w \sim (1-\delta) n$ is the number of whiskers in $\mathcal{T}$. By Chernoff's bound applied with $\mu = wp_- \sim (1-\delta) \ln n$ and 
$$
\eps = 1 - \frac {(1-\delta) \ln n}{1.01 \mu} \sim 1 - \frac {1}{1.01} = \frac {0.01}{1.01}
$$ 
we get that
\begin{eqnarray*}
\Pr \Big( \deg_{\mathcal{T}}(v) < \frac {1-\delta}{1.01} \ln n \Big) &=& \Pr \Big( \Bin(w,p_-) < (1-\eps) \mu \Big) \le n^{-(1-\delta)/10^5}.
\end{eqnarray*}
Hence, the expected number of vertices of medium or low degree is at most $n^{1-(1-\delta)/10^5}$. Let us now fix $k \in \N \cup \{0\}$. 
\begin{eqnarray*}
\Pr \Big( \deg_{\mathcal{T}}(v) =k \Big) &=& \binom{w}{k} p_-^k (1-p_-)^{w-k} \\
&\sim& \frac { \big( (1-\delta)n \big)^k}{k!} \left( \frac {\ln n}{n} \right)^k \exp \left( - (1+o(1)) \ \frac {\ln n}{n} \ (1-\delta) n \right)\\
&=& n^{-1 + \delta +o(1)}. 
\end{eqnarray*}
Hence, the expected number of vertices of low degree is at most $\sum_{k=0}^{\lceil 10/\delta \rceil} n^{\delta+o(1)} = n^{\delta+o(1)}$. Combining the two observations together, we get the following claim by Markov's inequality.

\begin{claim}\label{claim:medium_and_low}
The following properties hold a.a.s.:
\begin{enumerate}
\item At most $n^{1-(1-\delta)/10^6}$ vertices in $\mathcal{R}$ are of medium or low degree.
\item At most $n^{\delta+o(1)}$ vertices in $\mathcal{R}$ are of low degree.
\end{enumerate}
\end{claim}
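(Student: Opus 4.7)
The plan is to turn the two expected-count bounds just computed in the paragraph preceding the claim into a.a.s.\ upper bounds via Markov's inequality; no further probabilistic estimates are needed. For part~(i), the Chernoff calculation already yields $\Pr(\deg_{\mathcal{T}}(v) < \frac{1-\delta}{1.01}\ln n) \le n^{-(1-\delta)/10^5}$ for each $v \in \mathcal{R}$, so by linearity of expectation the number $X$ of medium or low degree vertices in $\mathcal{R}$ satisfies $\E[X] \le |\mathcal{R}| \cdot n^{-(1-\delta)/10^5} \le n^{1 - (1-\delta)/10^5}$. Applying Markov's inequality at the threshold $n^{1 - (1-\delta)/10^6}$, which exceeds $\E[X]$ by the polynomial factor $n^{9(1-\delta)/10^6} \to \infty$, gives $\Pr(X \ge n^{1 - (1-\delta)/10^6}) \le n^{-9(1-\delta)/10^6} = o(1)$, establishing part~(i).

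For part~(ii), I would use the second calculation: for each fixed $k \in \N \cup \{0\}$, $\Pr(\deg_{\mathcal{T}}(v) = k) = n^{-1+\delta+o(1)}$. Summing over the constantly many relevant values $k \in \{0, 1, \ldots, \lceil 10/\delta\rceil\}$ and over the at most $n$ vertices of $\mathcal{R}$ yields $\E[Y] \le n^{\delta + o(1)}$, where $Y$ denotes the number of low degree vertices. Markov's inequality then gives $\Pr(Y \ge \omega \cdot \E[Y]) \le 1/\omega = o(1)$ for any slowly growing $\omega = \omega(n) \to \infty$, and the extra factor of $\omega$ can be absorbed into the $o(1)$ appearing in the exponent, yielding the desired a.a.s.\ bound $Y \le n^{\delta+o(1)}$.

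The only conceptual check worth making is that these expectations are valid given the two-stage exposure of edges used in the argument: the degrees $\deg_{\mathcal{T}}(v)$ depend on the edges between $\mathcal{R}$ and $\mathcal{T}$, which by Claim~\ref{claim:summary_T}(viii) have not yet been revealed at the end of the BFS phase and are therefore genuinely distributed as $\text{Bin}(w, p_-)$. Since Markov's inequality does not require independence across the vertices $v \in \mathcal{R}$, no further joint analysis is needed and there is no substantive obstacle; the claim is essentially a bookkeeping consequence of the two Chernoff-type estimates already in hand.
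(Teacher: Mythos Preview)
Your proposal is correct and follows exactly the paper's approach: the paper simply states that the claim follows from the two expected-count bounds by Markov's inequality, and you have spelled out precisely that argument (including the polynomial slack in part~(i) and absorbing the $\omega$ factor into the $o(1)$ exponent in part~(ii)). Your additional remark about the validity of the $\Bin(w,p_-)$ distribution via Claim~\ref{claim:summary_T}(viii) is a welcome clarification that the paper leaves implicit.
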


Vertices of high and medium degree do not cause any problems, they can be appropriately attached to the tree. We will do it in Subsection~\ref{sec:remaining_vertices}. However, we need to pay attention to low degree vertices. For each low degree vertex $v \in \mathcal{R}$ we expose $\deg_{\mathcal{R}}(v)$, the number of neighbours in $\mathcal{R}$.

\medskip

For each low degree vertex $v \in \mathcal{R}$ with $\deg_{\mathcal{R}}(v) \ge 10^7/(1-\delta)$, we expose all neighbours in $\mathcal{R}$. By Claim~\ref{claim:medium_and_low}~(i), the probability that none of them is of high degree is at most
$$
\left( \frac {n^{1-(1-\delta)/10^6}}{ |\mathcal{R}| } \right)^{10^7/(1-\delta)} = \left( \frac {n^{-(1-\delta)/10^6}}{ (\delta+o(1)) } \right)^{10^7/(1-\delta)} = O(n^{-10}) = o(n^{-1}).
$$
Hence, by Markov's inequality, a.a.s.\ each low degree vertex $v$ of this type has a high degree neighbour that we call its \textbf{parent} and denote it by $P(v)$. Vertex $v$ itself is called \textbf{dangerous}. 

The remaining low degree vertices have less than $10^7/(1-\delta)$ neighbours in $\mathcal{R}$ and at most $\lceil 10/\delta \rceil$ neighbours in $\mathcal{T}$ (by the definition of being of low degree). Hence, their degrees are at most $C-2$, where $C = \lceil 10/\delta \rceil + 1 + 10^7/(1-\delta)$. By Lemma~\ref{lem:k-degree}, we may assume that there are at most $\sum_{k=0}^{C-2} (\ln n)^{k+1} \le (\ln n)^{C}$ of them. 

If $v$ has at least one neighbour in $\mathcal{R}$, then we simply expose the location of that neighbour, call it a \textbf{parent} of $v$ ($P(v)$), and $v$ itself becomes \textbf{dangerous}. Since the expected number of parents that are not of high degree is at most
$$
(\ln n)^{C} \cdot \frac {n^{1-(1-\delta)/10^6}}{ |\mathcal{R}| } = n^{-(1-\delta)/10^6+o(1)} = o(1),
$$
a.a.s.\ all parents are of high degree. 

Suppose now that $v$ has no neighbour in $\mathcal{R}$ but has at least one neighbour in $\mathcal{T}$. As before, we simply expose the location of that neighbour and attach $v$ to the tree through that neighbour that, as usual, is called a \textbf{parent} of $v$ and denoted $P(v)$. There are two things that need to be checked. First, observe that by Claim~\ref{claim:summary_T}~(iv) and (vi), the expected number of vertices that get attached to some bad whisker is at most
$$
(\ln n)^{C} \cdot \frac { O(n^{1-\delta/9} \ln n) } { | \mathcal{T} | } \le n^{-\delta/9+o(1)} = o(1).
$$
Hence, a.a.s.\ all vertices are attached to good whiskers. Moreover, the expected number of good vertices in $\mathcal{T}$ with at least 2 of its good whiskers attached to some low degree vertex is at most
$$
n \cdot \binom{(\ln n)^C}{2} \cdot \left( \frac {4 \ln n} { (1-\delta+o(1)) n } \right)^2 = n^{-1+o(1)} = o(1).
$$
Indeed, there are at most $n$ choices for good vertices, at most $\binom{(\ln n)^C}{2}$ choices for low degree vertices, and $\frac {4 \ln n} { (1-\delta+o(1)) n }$ is an upper bound for the probability that a given low degree vertex is adjacent to a whisker of the selected good vertex. 

\medskip

Combining all observations together we arrive with the following claim.

\begin{claim}\label{claim:parents_of_low_degree}
A.a.s.\ the following properties hold. Each vertex $v$ in $\mathcal{R}$ that is of low degree satisfies one of the following properties:
\begin{enumerate}
\item $v$ has a parent $P(v)$ in $\mathcal{R}$ that is of high degree.
\item $v$ has a parent $P(v)$ in $\mathcal{T}$ that is a good whisker. 
\item $v$ is an isolated vertex in $\G(n,p_-)$. 
\end{enumerate} 
Moreover, good vertices have at most one of their good whiskers attached to some low degree vertex.
\end{claim}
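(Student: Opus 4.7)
The plan is to partition the low-degree vertices of $\mathcal{R}$ according to $\deg_{\mathcal{R}}(v)$ and show, in each regime, that the parent $P(v)$ demanded by the claim can be located a.a.s., with the three alternatives (i)--(iii) covering every case by construction.

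First I would handle the low-degree vertices $v$ with $\deg_{\mathcal{R}}(v) \ge 10^7/(1-\delta)$. Exposing the $\mathcal{R}$-neighbours of $v$ only at this stage, the probability that all of them lie in the set of non-high-degree vertices of $\mathcal{R}$ (which by Claim~\ref{claim:medium_and_low}(i) has size at most $n^{1-(1-\delta)/10^6}$) is at most
$$\left( \frac{n^{1-(1-\delta)/10^6}}{|\mathcal{R}|} \right)^{10^7/(1-\delta)} = O(n^{-10}),$$
so a union bound over $v \in \mathcal{R}$ places us in case (i) a.a.s.

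Next I would handle the remaining low-degree vertices: these have $\deg_{\mathcal{R}}(v) < 10^7/(1-\delta)$ and $\deg_{\mathcal{T}}(v) \le \lceil 10/\delta \rceil$, so their total degree is bounded by the absolute constant $C-2$ with $C = \lceil 10/\delta \rceil + 1 + 10^7/(1-\delta)$. By Lemma~\ref{lem:k-degree} there are a.a.s. at most $(\ln n)^C$ such vertices. If $v$ has any $\mathcal{R}$-neighbour I take the first one as $P(v)$; the expected number of such $P(v)$ that fail to be high-degree is at most $(\ln n)^C \cdot n^{-(1-\delta)/10^6 + o(1)} = o(1)$, so again we are in case (i). If not, but $v$ has a $\mathcal{T}$-neighbour, I take the first one as $P(v)$ and I must verify for case (ii) both that $P(v)$ is a good whisker and that no good vertex has two of its good whiskers used this way. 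The first is a direct first-moment bound using Claim~\ref{claim:summary_T}(iv,vi), yielding an expected number of bad selections at most $(\ln n)^C \cdot O(n^{-\delta/9} \ln n) = o(1)$. The second is the collision bound
$$n \cdot \binom{(\ln n)^C}{2} \cdot \left( \frac{4 \ln n}{(1-\delta+o(1))n} \right)^2 = n^{-1+o(1)} = o(1).$$
Finally, if $v$ has neither an $\mathcal{R}$- nor a $\mathcal{T}$-neighbour, then $v$ is isolated in $\G(n,p_-)$, which is case (iii). Markov's inequality converts each expectation into its a.a.s.\ counterpart, and the ``moreover'' clause is exactly the collision bound just used.

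The main obstacle is choosing the threshold $10^7/(1-\delta)$ correctly: it must be large enough that the tail bound in the ``many-$\mathcal{R}$-neighbours'' case dominates $|\mathcal{R}|$ after union bound, and small enough that the complementary ``few-$\mathcal{R}$-neighbours'' class consists of vertices of uniformly bounded total degree, so that Lemma~\ref{lem:k-degree} limits its size to polylogarithmic. Once those thresholds are fixed, the rest is a sequence of first-moment estimates; the tightest of these is the collision bound, where the $(\ln n)^C$ cardinality entering squared is still overwhelmed by the $1/n^2$ coming from whisker-adjacency probabilities, leaving the desired $o(1)$ margin.
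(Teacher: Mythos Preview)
Your proposal is correct and follows essentially the same approach as the paper: the same threshold $10^7/(1-\delta)$ splits the low-degree vertices, the same constant $C$ bounds the total degree in the small-$\deg_{\mathcal{R}}$ regime so that Lemma~\ref{lem:k-degree} applies, and the three first-moment bounds (including the collision bound for the ``moreover'' clause) are identical to those in the paper. There is nothing to add.
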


\subsection{Dealing with Isolated Vertices}\label{sec:isolated_vertices}

Low degree vertices in $\mathcal{R}$ that are not isolated are already attached to their parents. Dangerous vertices have their parents in $\mathcal{R}$ that are of high degree. The remaining low degree vertices have their parents already in $\mathcal{T}$. It is time to deal with isolated vertices that form a set $\mathcal{I} \subseteq \mathcal{R}$ that are present in $\G(n,p)$. By Lemma~\ref{lem:k-degree}, we may assume that $|\mathcal{I} | \le (\ln n)^{o(1)}$.

Recall that the binomial random graph and the Erd\H{o}s-R\'enyi process are coupled such that $\G(n,p_-) \subseteq \G(n,M)$. We may then simply start the process from $\G(n,p_-)$ and continue until we reach $\G(n,M)$, that is, when the last isolated vertex disappears. We ignore all incoming edges unless they are adjacent to one of the vertices in $\mathcal{I}$. If one of the endpoints of an edge is $v \in \mathcal{I}$ that is still isolated, then the other endpoint becomes a \textbf{parent} $P(v)$ of $v$. Since $v$ is isolated, its parent is a random vertex taken uniformly at random from $[n] \setminus \{v\}$. The expected number of parents created this way that are either bad whiskers in $\mathcal{T}$ or medium/low degree in $\mathcal{R}$ is, by Claim~\ref{claim:summary_T}~(iv) and Claim~\ref{claim:medium_and_low}~(i), at most
$$
(\ln n)^{o(1)} \cdot \frac {O(n^{1-\delta/9} \ln n) + O(n^{1-(1-\delta)/10^6})}{n-1} \le n^{-\min\{\delta/9, (1-\delta)/10^6\}+o(1)} = o(1).
$$
We get the following claim by Markov's inequality.

\begin{claim}\label{claim:parents_of_isolated}
A.a.s.\ the following properties hold. There are at most $(\ln n)^{o(1)}$ isolated vertices in $\G(n,p_-)$. Each isolated vertex $v$ has a parent $P(v)$ identified that is either a good whisker in $\mathcal{T}$ or a high degree vertex in $\mathcal{R}$.
\end{claim}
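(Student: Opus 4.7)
The plan is to extract both parts of the claim from results already in hand. The bound $|\mathcal{I}| \le (\ln n)^{o(1)}$ is immediate from Lemma~\ref{lem:k-degree} applied with $k=0$, so I would concentrate my effort on the parent assignment.

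For that, I would use the coupling $\G(n,p_-) \subseteq \G(n,M)$ and continue the Erd\H{o}s-R\'{e}nyi random graph process one edge at a time, ignoring every newly arriving edge that is not incident to some $v \in \mathcal{I}$ that is still isolated. When the first edge incident to such a $v$ finally appears---which must happen by time $M$, since $\G(n,M)$ is connected by definition---I would declare its other endpoint to be $P(v)$. The crucial point is that, conditionally on $\G(n,p_-)$ and on $v$ being isolated in it, by exchangeability of the vertex labels in $[n] \sm \{v\}$, the parent $P(v)$ is distributed uniformly on $[n] \sm \{v\}$.

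Once this uniformity is in hand, the probability that $P(v)$ is ``bad''---meaning neither a good whisker in $\mathcal{T}$ nor a high-degree vertex in $\mathcal{R}$---equals the fraction of $[n]$ occupied by bad vertices. The principal contributions are bad whiskers in $\mathcal{T}$ (at most $O(n^{1-\delta/9} \ln n)$ by Claim~\ref{claim:summary_T}(iv)) and medium/low-degree vertices in $\mathcal{R}$ (at most $n^{1-(1-\delta)/10^6}$ by Claim~\ref{claim:medium_and_low}(i)); good and bad non-whisker vertices in $\mathcal{T}$, which together total $O(n/\ln n)$ by Claim~\ref{claim:summary_T}(iii) and (v), contribute only an additional $O(1/\ln n)$ and are similarly negligible. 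A union bound over the $(\ln n)^{o(1)}$ elements of $\mathcal{I}$ then yields an expected number of bad parents of order $o(1)$, and Markov's inequality closes the argument.

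The one step requiring any care is the uniformity assertion for $P(v)$: I would need to check that conditioning on ``$v$ is isolated in $\G(n,p_-)$'' does not bias where the next edge touching $v$ lands. This is really a symmetry statement under relabelling of $[n] \sm \{v\}$, which is respected both by the independent-edge description of $\G(n,p_-)$ and by the uniformly random edge-ordering defining the Erd\H{o}s-R\'{e}nyi process. Beyond that, everything reduces to the clean union bound above.
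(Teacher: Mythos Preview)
Your proposal is correct and follows essentially the same route as the paper: invoke Lemma~\ref{lem:k-degree} with $k=0$ for the size bound, continue the process from $\G(n,p_-)$ to $\G(n,M)$, use uniformity of $P(v)$ over $[n]\setminus\{v\}$, and finish with a union bound plus Markov. In fact you are slightly more careful than the paper, which only bounds the probability that $P(v)$ is a bad whisker or a medium/low-degree vertex in $\mathcal{R}$, tacitly omitting the $O(n/\ln n)$ good and bad (non-whisker) vertices in $\mathcal{T}$; your additional $O((\ln n)^{o(1)}/\ln n)=o(1)$ term handles exactly this case and is needed for the claim as literally stated.
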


As usual, if an isolated vertex $v$ has a parent in $\mathcal{R}$, then it is called \textbf{dangerous}. 

\subsection{Connecting Remaining Vertices}\label{sec:remaining_vertices}

We are now back to building a spanning rooted tree of the giant component of $\G(n,p_-)$. All low degree vertices in $\mathcal{R}$ are already dealt with. It remains to attach high and medium degree vertices to $\mathcal{T}$. They need to be appended to some good whisker but we will also need to make sure that no good vertex has all of its good whiskers selected by some vertex in $\mathcal{R}$. Hence, we cannot blindly connect all remaining vertices in $\mathcal{R}$ to some good whiskers (as it would create a problem a.a.s.) but rather we need to do it carefully by selecting a perfect matching between high and medium degree vertices in $\mathcal{R}$ and pairs of good whiskers. Moreover, parents of dangerous vertices will have to be attached to some special places so we need to put them aside for a moment and deal with them later. The reason for all of these restrictions will become clear in Subsection~\ref{sec:acquisition}.

Let $R \subseteq \mathcal{R}$ be the set of high and medium degree vertices in $\mathcal{R}$ that are not parents of any dangerous vertices. Our goal is to attach all vertices from $R$ to good whiskers in $\mathcal{T}$. By definition,  each vertex in $R$ has at least $\lceil 10/\delta \rceil+1$ neighbours in $\mathcal{T}$. We expose the information whether these neighbours are bad or good whiskers but we do not expose their exact locations yet. The probability that a given vertex $v \in R$ has at least $\lceil 10/\delta \rceil$ neighbours being bad whiskers is, by Claim~\ref{claim:summary_T}~(i) and~(iv)
\begin{align*}
\binom{\lceil 10/\delta \rceil+1}{\lceil 10/\delta \rceil} & \left( \frac {O(n^{1-\delta/9} \ln n)}{|\mathcal{T}|} \right)^{\lceil 10/\delta \rceil} = O(1) \cdot \left( n^{-\delta/9} \ln n \right)^{\lceil 10/\delta \rceil} \\
&= n^{-(\delta/9)\lceil 10/\delta \rceil + o(1)} \le n^{-10/9 + o(1)} = o(n^{-1}).
\end{align*}
Hence, we get the next claim by Markov's inequality. 

\begin{claim}\label{claim:two_neighbours}
A.a.s.\ each vertex in $R \subseteq \mathcal{R}$ has at least two neighbours in $\mathcal{T}$ that are good whiskers.
\end{claim}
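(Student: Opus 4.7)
The plan is to leverage the exposure bookkeeping that has been built up so far. By parts (vii) and (viii) of Claim~\ref{claim:summary_T}, every edge between $\mathcal{R}$ and a good or bad vertex of $\mathcal{T}$ has already been revealed to be a non-edge, while no edge between a whisker in $\mathcal{T}$ and $\mathcal{R}$ has been touched. The subsequent exposures carried out in Subsections~\ref{sec:partitioning} and~\ref{sec:isolated_vertices} involve only edges incident to low-degree or isolated vertices of $\mathcal{R}$, none of which lie in $R$, so the edges between $v \in R$ and the whiskers of $\mathcal{T}$ remain independent $p_-$-Bernoullis. Consequently the entire $\mathcal{T}$-neighbourhood of each $v \in R$ must consist of whiskers.

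Fix $v \in R$. By the definition of $R$ one has $\deg_{\mathcal{T}}(v) \ge \lceil 10/\delta \rceil + 1$, so if $v$ had at most one good-whisker neighbour it would necessarily have at least $\lceil 10/\delta \rceil$ bad-whisker neighbours; it therefore suffices to bound the probability of this latter event. Using the estimate of at most $O(n^{1-\delta/9}\ln n)$ bad whiskers from Claim~\ref{claim:summary_T}(iv), I would apply a union bound over all $\lceil 10/\delta \rceil$-subsets of bad whiskers that could simultaneously be adjacent to $v$, giving a per-vertex probability of at most
\[
\binom{O(n^{1-\delta/9}\ln n)}{\lceil 10/\delta \rceil} \, p_-^{\lceil 10/\delta \rceil} = O\Bigl(\bigl(n^{-\delta/9}(\ln n)^2\bigr)^{\lceil 10/\delta \rceil}\Bigr) = n^{-10/9 + o(1)} = o(n^{-1}),
\]
using $\lceil 10/\delta \rceil \cdot \delta/9 \ge 10/9 > 1$ in the last estimate. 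A final union bound over the at most $n$ candidate vertices of $R$ then delivers the claim.

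I do not anticipate a genuine technical obstacle here. The only thing one has to be careful about is confirming that the $p_-$-independence between $v \in R$ and the whiskers is genuinely preserved: the crucial point is that all prior exposures either concerned non-whisker vertices (during the BFS) or concerned edges incident to vertices outside $R$ (low-degree or isolated vertices), so the relevant edges are still undisturbed. The tuned choice $\delta = 0.06$ is not what makes this step work; what is needed here is simply the inequality $\lceil 10/\delta \rceil \cdot \delta/9 > 1$, which holds comfortably, and so the constraints on $\delta$ must come from other parts of the overall argument.
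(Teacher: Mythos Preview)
Your argument is essentially the same as the paper's: both reduce to showing that the probability of a vertex having at least $\lceil 10/\delta\rceil$ bad-whisker neighbours is $o(n^{-1})$, and both arrive at the same estimate $n^{-10/9+o(1)}$. The only difference is cosmetic: the paper fixes $\lceil 10/\delta\rceil+1$ of the revealed neighbours and bounds the chance that $\lceil 10/\delta\rceil$ of them land among the bad whiskers (using the ratio $O(n^{1-\delta/9}\ln n)/|\mathcal{T}|$), whereas you take a union over $\lceil 10/\delta\rceil$-subsets of bad whiskers and multiply by $p_-^{\lceil 10/\delta\rceil}$.

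One small inaccuracy in your bookkeeping: it is not quite true that ``the edges between $v\in R$ and the whiskers of $\mathcal{T}$ remain independent $p_-$-Bernoullis''. The very first step of Subsection~\ref{sec:partitioning} exposes $\deg_{\mathcal{T}}(v)$ for \emph{every} $v\in\mathcal{R}$, not just the low-degree ones; after conditioning on this count the individual edges are no longer independent Bernoullis. This does not damage your argument, because your bound
\[
\binom{O(n^{1-\delta/9}\ln n)}{\lceil 10/\delta\rceil}\,p_-^{\lceil 10/\delta\rceil}=o(n^{-1})
\]
is valid \emph{unconditionally} (i.e.\ conditioned only on the BFS outcome, before any degree is revealed). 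A union bound over all $v\in\mathcal{R}$ then shows that a.a.s.\ no vertex of $\mathcal{R}$ whatsoever has $\lceil 10/\delta\rceil$ bad-whisker neighbours, and restricting afterwards to $R$ gives the claim. So the fix is simply to phrase the bound as holding for all of $\mathcal{R}$ prior to the degree exposure, rather than asserting independence post-exposure.
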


Now, it is time to connect vertices from $R$ to good whiskers that are partitioned into \textbf{buckets}. Every bucket consists of two good whiskers (except possibly one bucket, if the number of good whiskers is odd). To get the desired partition, we investigate good vertices, one by one, and assign their good whiskers into buckets, leaving at most one good whisker unassigned per good vertex. Then, we arbitrarily put the remaining good whiskers into buckets leaving at most one good whisker that will have its own bucket.

We expose the two neighbours of vertices in $R$ that are guaranteed to exist by Claim~\ref{claim:two_neighbours}. Our goal is to show that there exists a matching between set $R$ and buckets that saturates $R$. In order to do that, we prove that the Hall's condition (both necessary and sufficient condition for the desired perfect matching to exist) holds a.a.s.: for all $X \subseteq R$, we have $|N(X)| \ge |X|$, where $N(X)$ is the set of buckets the two edges from $X$ are incident to.

The Hall's condition fails if, for some value of $k$ such that $2 \le k \le |R|$, there exists a set $X \subseteq R$ of $k$ vertices and a set $Y$ of $k-1$ buckets such that all neighbours of $X$ are in $Y$, that is, $N(X) \subseteq Y$. By Claims~\ref{claim:summary_T}~(ii), \ref{claim:medium_and_low}~(i), and~\ref{claim:parents_of_isolated}, we may assume that $|R| \sim |\mathcal{R}| \sim \delta n$. Hence, there are $\binom{(\delta+o(1))n}{k}$ choices for $X$. By Claim~\ref{claim:summary_T}~(vi), we may assume that there are $\binom{((1-\delta)/2+o(1))n}{k-1}$ choices for $Y$. The probability that both neighbours of a given vertex in $X$ are in $Y$ is at most $( 2(k-1) / ((1-\delta+o(1))n) )^2$. Hence, the expected number of sets $X \subseteq R$ of size $k$ for which the condition fails is at most
\begin{align*}
\xi_k &= \binom{(\delta+o(1))n}{k} \binom{( \frac{1-\delta}{2}+o(1))n}{k-1} \left( \frac {2(k-1)}{(1-\delta+o(1))n} \right)^{2k} \\
&\le \left( \frac {e(\delta+o(1))n}{k} \right)^k \left( \frac{ e( \frac{1-\delta}{2}+o(1))n}{k-1} \right)^{k-1} \left( \frac {2(k-1)}{(1-\delta+o(1))n} \right)^{2k} \\
&\le \frac {1}{n} \left( \frac {2 e^2 \delta}{1-\delta} +o(1) \right)^k \ \frac {(k-1)^{k+1}}{k^k}.
\end{align*}
Observe that 
$$
\frac {(k-1)^{k+1}}{k^k} = (k-1) \left( 1 - \frac {1}{k} \right)^k = O(k),
$$
and so
$$
\xi_k = \frac {O(k)}{n} \left( \frac {2 e^2 \delta}{1-\delta} +o(1) \right)^k.
$$
Since $\delta = 0.06$, we get that $(2 e^2 \delta)/(1-\delta) < 0.95$. (This is the first time when the numerical value of $\delta$ matters. For the argument to hold, it has to be small enough.) It follows that the expected number of sets $X \subseteq R$ of any size for which the condition fails is at most
$$
\sum_{k = 2}^{|R|} \xi_k = \sum_{k = 2}^{|R|} \frac {O(k)}{n} \ 0.95^k \le \sum_{k = 2}^{(\ln n)^2} \frac {O(k)}{n} + \sum_{k = (\ln n)^2}^{|R|} O \left( 0.95^{(\ln n)^2} \right) = o(1).
$$
Hence, by Markov's inequality, a.a.s.\ the Hall's condition holds and we get the next claim.

\begin{claim}\label{claim:matching_saturating_R}
A.a.s.\ there exists a matching between set $R$ and the set of buckets that saturates $R$.
\end{claim}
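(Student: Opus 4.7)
The plan is to invoke Hall's marriage theorem on the bipartite graph $H$ whose left side is $R$, whose right side is the collection of buckets, and in which $v\in R$ is joined to every bucket containing one of the two good-whisker neighbors of $v$ guaranteed by Claim~\ref{claim:two_neighbours}. A matching saturating $R$ exists if and only if $|N(X)|\ge|X|$ for every $X\subseteq R$, so it suffices to show that Hall's condition holds a.a.s.

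I would proceed by a first-moment union bound. Suppose Hall's condition fails; then for some $k\ge 2$ there is $X\subseteq R$ with $|X|=k$ and a set $Y$ of $k-1$ buckets with $N(X)\subseteq Y$. Combining Claims~\ref{claim:summary_T}, \ref{claim:medium_and_low}, and~\ref{claim:parents_of_isolated} gives $|R|\sim\delta n$, and Claim~\ref{claim:summary_T}(vi) gives $\sim(1-\delta)n/2$ buckets; hence the number of $(X,Y)$ pairs is at most $\binom{(\delta+o(1))n}{k}\binom{((1-\delta)/2+o(1))n}{k-1}$. Since the two exposed good-whisker neighbors of each $v\in X$ are essentially uniformly distributed among the $\sim(1-\delta)n$ good whiskers, and $Y$ contains at most $2(k-1)$ of them, the probability that both such neighbors of every $v\in X$ lie in whiskers of $Y$ is at most $\bigl(2(k-1)/((1-\delta+o(1))n)\bigr)^{2k}$. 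Applying the standard bound $\binom{m}{j}\le(em/j)^j$ and simplifying via $(k-1)^{k+1}/k^k=O(k)$, I would reduce the expected count $\xi_k$ of bad pairs of size $k$ to the form $(O(k)/n)\cdot\rho^k$, where $\rho=2e^2\delta/(1-\delta)+o(1)$.

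The main obstacle is that the geometric factor $\rho$ must be strictly smaller than $1$ for the bound to be summable, which is precisely the constraint that forces $\delta$ to be chosen small. With $\delta=0.06$ one checks $2e^2\cdot 0.06/0.94<0.95$, so splitting the sum as $\sum_{k=2}^{(\ln n)^2}(O(k)/n)\cdot 0.95^k=O((\ln n)^4/n)=o(1)$ and $\sum_{k>(\ln n)^2}O(1)\cdot 0.95^{(\ln n)^2}=o(1)$ gives $\sum_{k\ge 2}\xi_k=o(1)$. Markov's inequality then yields Hall's condition a.a.s., and the desired matching follows.
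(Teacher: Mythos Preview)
Your proposal is correct and follows essentially the same approach as the paper: both verify Hall's condition via a first-moment union bound over pairs $(X,Y)$, obtain the same expression $\xi_k=(O(k)/n)\cdot\rho^k$ with $\rho=2e^2\delta/(1-\delta)$, note that $\delta=0.06$ forces $\rho<0.95$, and split the sum at $k=(\ln n)^2$ to conclude by Markov's inequality. The computations and the observation that this is precisely where the smallness of $\delta$ is needed are identical.
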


We connect vertices in $R$ to the tree using the edges of the matching. Good whiskers that are associated with vertices in $R$ are called \textbf{lucky}. Since it is a matching saturating $R$, the number of lucky whiskers is equal to $|R| \sim \delta n$. It remains to connect parents of dangerous vertices (that are in $\mathcal{R}$) to the tree. By Claim~\ref{claim:parents_of_low_degree} and~\ref{claim:parents_of_isolated}, these parents are of high degree, that is, they have at least $\frac {1-\delta}{1.01} \ln n$ neighbours in $\mathcal{T}$. Since dangerous vertices are of low degree, by Claim~\ref{claim:medium_and_low}~(ii) there are at most $n^{\delta+o(1)}$ parents to deal with. Our goal is to connect them to lucky whiskers. As usual, the reason for this will be clear in Subsection~\ref{sec:acquisition}.

We expose edges from parents of dangerous vertices to $\mathcal{T}$. The expected number of them with no lucky neighbour is at most
\begin{align*}
n^{\delta+o(1)} & \left( 1 - (1+o(1)) \frac {|R|}{|\mathcal{T}|} \right)^{\frac {1-\delta}{1.01} \ln n} \\
&= \exp \left( \left( \delta + \frac {1-\delta}{1.01} \cdot \ln \left( 1-\frac {\delta}{1-\delta} \right) + o(1) \right) \ln n \right) \\
&\le \exp( - 0.001 \ln n ) = o(1).
\end{align*}
(This is the second time when the numerical value of $\delta$ matters. For the argument to hold, $\delta$ has to be large enough. A careful reader probably noticed that a typical vertex in $\mathcal{R}$ has no lucky neighbour with probability $(1-p_-)^{|R|} = n^{-\delta+o(1)}$ and so the argument would not work if dangerous vertices simply select any vertex from $\mathcal{R}$ as a parent. That was the reason we carefully selected them to be of high degree.) As usual, Markov's inequality is enough to get the last claim. This concludes our tedious process of constructing the rooted tree. 

\begin{claim}
A.a.s.\ parents of dangerous vertices are attached to lucky whiskers in $\mathcal{T}$.
\end{claim}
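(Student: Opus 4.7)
The plan is to obtain this final claim by a direct first-moment / union bound argument, exploiting that at this point in the construction, the edges from parents of dangerous vertices to $\mathcal{T}$ have not yet been revealed. Concretely, I would first observe that throughout the process so far, only the counts $\deg_{\mathcal{T}}(v)$ (and later the memberships ``good whisker / bad whisker'' of a few specific neighbours) have been exposed for vertices of $\mathcal{R}$; by Claim~\ref{claim:summary_T}~(vii)--(viii) all good/bad non-whisker vertices in $\mathcal{T}$ have no neighbour in $\mathcal{R}$, so every neighbour of a parent of a dangerous vertex inside $\mathcal{T}$ is a whisker, and conditional on the counts, these whisker neighbours are essentially uniformly distributed among the whiskers of $\mathcal{T}$.

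Next I would package the relevant parameters. By Claim~\ref{claim:medium_and_low}~(ii), the number of dangerous vertices, and hence of their parents, is at most $n^{\delta+o(1)}$. By Claim~\ref{claim:parents_of_low_degree} and Claim~\ref{claim:parents_of_isolated}, each such parent is of high degree, so it has at least $\tfrac{1-\delta}{1.01}\ln n$ neighbours in $\mathcal{T}$, all of which are whiskers. The lucky whiskers form a set of size $|R|\sim \delta n$ inside the whisker set of size $\sim(1-\delta)n$, so the probability that a given whisker-neighbour of a parent fails to be lucky is $1-(1+o(1))\frac{\delta}{1-\delta}$.

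With this setup, for each parent $u$ of a dangerous vertex, the probability that none of its $\geq \tfrac{1-\delta}{1.01}\ln n$ whisker neighbours is lucky is at most
\[
\Bigl(1-(1+o(1))\tfrac{\delta}{1-\delta}\Bigr)^{\frac{1-\delta}{1.01}\ln n} = \exp\!\Bigl(\bigl(\tfrac{1-\delta}{1.01}\ln(1-\tfrac{\delta}{1-\delta})+o(1)\bigr)\ln n\Bigr).
\]
Multiplying by the $n^{\delta+o(1)}$ parents, the expected number of ``bad'' parents (with no lucky neighbour) is at most $\exp\!\bigl((\delta+\tfrac{1-\delta}{1.01}\ln(1-\tfrac{\delta}{1-\delta})+o(1))\ln n\bigr)$. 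A numerical check with $\delta=0.06$ gives an exponent $\leq -0.001\ln n$, hence the expectation is $o(1)$; Markov's inequality then yields the claim.

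The main technical point I expect to have to be careful about is the book-keeping of what has and has not been exposed: I must be sure that when the edges from parents of dangerous vertices into $\mathcal{T}$ are revealed, the conditional distribution still makes every whisker in $\mathcal{T}$ equally likely (so that the fraction $|R|/|\text{whiskers}|\sim \delta/(1-\delta)$ really controls the probability of being lucky). The other delicate point, flagged by the authors themselves, is the numerical balance: the exponent $\delta+\tfrac{1-\delta}{1.01}\ln(1-\tfrac{\delta}{1-\delta})$ must be strictly negative, which forces $\delta$ to be bounded away from $0$; together with the opposite constraint from the Hall-type matching step (which needed $2e^2\delta/(1-\delta)<1$, hence $\delta$ small), this traps $\delta$ in a narrow interval and is the reason the specific value $\delta=0.06$ is used.
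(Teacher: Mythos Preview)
Your proposal is correct and matches the paper's own argument essentially line for line: the same first-moment bound, the same count $n^{\delta+o(1)}$ of parents, the same high-degree lower bound $\tfrac{1-\delta}{1.01}\ln n$ on their neighbours in $\mathcal{T}$, the same fraction $|R|/|\mathcal{T}|\sim\delta/(1-\delta)$ of lucky whiskers, and the same numerical check that the exponent is at most $-0.001$. Your discussion of what has and has not been exposed is, if anything, slightly more explicit than the paper's.
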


\subsection{Acquisition Protocol on the Tree}\label{sec:acquisition}

In previous subsections, we created a rooted spanning tree $T$ that is a subgraph of $\G(n,M)$. Our last task is to use edges of $T$ to preform an acquisition protocol that yields a residual set consisting only of the root $v_0$ of $T$. It will imply that $a_u(\G(n,M)) \le a_u(T) = 1$ and so it will finish the proof.

Since tree $T$ is rooted at vertex $v_0$ we may introduce \textbf{levels} depending on the distance from the root. Whiskers that are at the greatest distance from $v_0$ are on level 3 and the closer to the root vertices are, the larger the corresponding level is. Let us summarize the structure of the tree before we prove that its unit acquisition number is equal to 1. See Figure~\ref{fig:tree} for an illustration. 

\begin{enumerate}
\item The root, vertex $v_0$, is at the highest level $h \sim \ln n / \ln \ln n$. (By Claim~\ref{claim:summary_T}~(ix).)
\item All good vertices are on levels between 4 and $h$.
\item All good vertices have at least $(\frac {\delta}{8} + o(1)) \ln n \gg h$ children that are leaves. (Indeed, by definition each good vertex has at least $\frac {\delta}{4} \ln n$ good whiskers. By Claim~\ref{claim:parents_of_low_degree}, at most one of them is attached to low degree vertex. By Claim~\ref{claim:parents_of_isolated}, at most $(\ln n)^{o(1)}$ of them are attached to vertices that were isolated in $\G(n,p_-)$. By Claim~\ref{claim:matching_saturating_R}, at most $\frac {\delta}{8} \ln n + 1$ of them are lucky, that is, attached to vertices in $R$.)
\item All bad vertices are either leaves or their children are leaves. 
\item All vertices in $\mathcal{R}$ are at distance 1 or 2 from some good whisker. Those that are distance 2 (dangerous vertices) are connected through their parents to some good and lucky whiskers. By definition, that lucky whisker has at least one leaf attached.
\end{enumerate}

\begin{figure}[ht]
\centering
\includegraphics[angle=0,width=14cm]{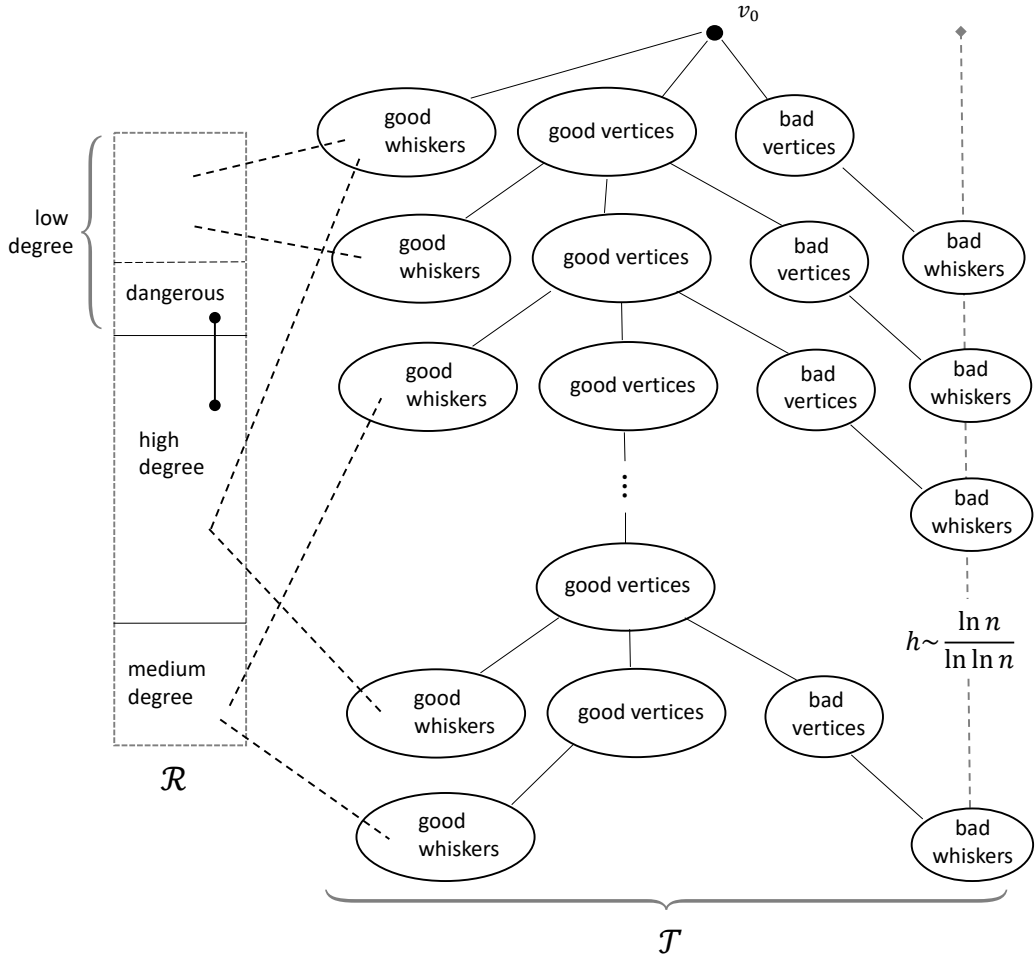}
\caption{Rooted spanning tree of $\G(n,M)$.}
\label{fig:tree}
\end{figure}

The acquisition protocol works as follows. Because of properties~(i-iii), we may move the weight from some children of good vertices that are leaves so that each good vertex on level $k$ has weight equal to $k$. If a bad vertex has at least one leaf attached, then we move one unit from an arbitrary leaf to this bad vertex. By property~(iv), each bad vertex is either a leaf or has weight equal to 2. If a good whisker has at least one leaf attached, then we move one unit from one of the leaves so that this good whisker has weight equal to 2. In particular, all lucky whiskers have weight 2. If a vertex $v \in \mathcal{R}$ is a parent of some dangerous vertex, then we move the weight from that vertex to $v$ and then move one unit from $v$ to the corresponding lucky good whisker. After that operation, the lucky good whisker has weight equal to 3. If $v$ is a parent to another dangerous vertex, we pick one of them arbitrarily and move its weight to $v$.

After that operation, our job is easy as the rooted and weighted tree has a very nice property. For any edge $uv$ in the tree ($u$ is closer to the root than $v$), the weight on $u$ is larger than the weight on $v$. It is straightforward to see that one can now move all weight to the root $v_0$. Indeed, in each step, one can consider the tree induced by vertices with non-zero weight, pick an arbitrary leaf, and move one unit from there all the way up to the root. We may repeat this step until all weight is accumulated on $v_0$. This finishes the proof of the main theorem and the paper.

\end{document}